\newcounter{point}
\renewcommand{\leq}{\leqslant}
\renewcommand{\geq}{\geqslant}
\numberwithin{equation}{section}
\newcommand{\Cc}{\mathbf{C}}
\newcommand{\Zz}{\mathbf{Z}}
\newcommand{\Rr}{\mathbf{R}}
\newcommand{\Fp}{{\mathbf{F}_p}}
\newcommand{\Fpt}{{\mathbf{F}^\times_p}}
\newcommand{\proba}{\mathbf{P}}
\newcommand{\expect}{\mathbf{E}}
\newcommand{\mods}[1]{\,(\mathrm{mod}\,{#1})}
\DeclareMathOperator{\hypk}{Kl}
\DeclareMathOperator{\hypkm}{Kl_2^{(\cdot)}}
\newcommand{\kpath}{\mathrm{K}}
\newcommand{\st}{\mathrm{ST}}
\newcommand{\ra}{\rightarrow}
\DeclareMathOperator{\Imag}{Im}
\DeclareMathOperator{\Reel}{Re}
\DeclareMathOperator{\supp}{supp}
\newcommand{\eps}{\varepsilon}
\renewcommand{\rho}{\varrho}
\DeclareMathOperator{\SU}{SU}
\newcommand{\demi}{{\textstyle{\frac{1}{2}}}}
\newcommand{\fct}[1]{\mathcal{{#1}}}
\newcommand{\spt}{\mathcal{S}}
\DeclareMathSymbol{\gena}{\mathord}{letters}{"3C}
\DeclareMathSymbol{\genb}{\mathord}{letters}{"3E}
\theoremstyle{plain}
\newtheorem{theorem}{Theorem}[section]
\newtheorem{lemma}[theorem]{Lemma}
\newtheorem{corollary}[theorem]{Corollary}
\newtheorem{proposition}[theorem]{Proposition}
\newtheorem*{proposition*}{Proposition}
\theoremstyle{remark}
\theoremstyle{definition}
\newtheorem{example}[theorem]{Example}
\newtheorem{remark}[theorem]{Remark}
\begin{document}

\title{On the support of the Kloosterman paths}

\author{Emmanuel  Kowalski}
\address{ETH Z\"urich -- D-MATH\\
  R\"amistrasse 101\\
  8092 Z\"urich\\
  Switzerland} 
\email{kowalski@math.ethz.ch}

\author{Will Sawin}
\address{ETH Institute for Theoretical Studies, ETH Zurich, 8092 Zürich}
\email{william.sawin@math.ethz.ch}

\date{\today,\ \thistime}

\subjclass[2010]{11L05, 11T23, 42A16, 42A32, 60F17, 60G17, 60G50}

\keywords{Exponential sums, Kloosterman sums, Kloosterman paths,
  support of random series, Fourier series}

\begin{abstract} 
  We obtain statistical results on the possible distribution of
  \emph{all} partial sums of a Kloosterman sum modulo a prime, by
  computing explicitly the support of the limiting random Fourier
  series of our earlier functional limit theorem for Kloosterman
  paths.
\end{abstract}

% \keywords{Kloosterman sums, Kloosterman sheaves, Riemann Hypothesis
%   over finite fields, random Fourier series, short exponential sums,
%   probability in Banach spaces}

\thanks{Supported partly by a DFG-SNF lead agency program grant (grant
  200021L\_153647). W.S. partially supported by Dr. Max
  R\"ossler, the Walter Haefner Foundation and the ETH Zurich
  Foundation.}

\maketitle

\section{Introduction}

Let $p$ be a prime number. For $(a,b)\in\Fpt\times \Fpt$, we denote
$$
\hypk_2(a,b;p)=\frac{1}{\sqrt{p}} \sum_{x\in\Fpt}
e\Bigl(\frac{ax+b\bar{x}}{p}\Bigr)
$$
(where $e(z)=e^{2i\pi z}$ for $z\in\Cc$) the normalized Kloosterman
sums modulo $p$.  As in our previous paper~\cite{ks}, we consider the
\emph{Kloosterman paths} $t\mapsto K_p(a,b)(t)$ for $0\leq t\leq 1$,
namely the random variables on the finite set $\Fpt\times\Fpt$
obtained by linearly interpolating the partial sums
$$
(a,b)\mapsto \frac{1}{\sqrt{p}} \sum_{1\leq x\leq j}
e\Bigl(\frac{ax+b\bar{x}}{p}\Bigr),\quad\quad 0\leq j\leq p-1
$$
that correspond to $t=j/(p-1)$ (see~\cite[\S 1]{ks}). The set
$\Fpt\times\Fpt$ is viewed as a probability space with the uniform
probability measure, denoted $\proba_p$.
\par
We proved~\cite[Th. 1.1, Th. 1.5]{ks} that as $p\to+\infty$, the
$C([0,1])$-valued random variables $K_p$ converge in law to the random
Fourier series
% and their limiting Fourier
% series, as defined and studied by Kowalski and Sawin~\cite{ks}, namely
$$
\kpath(t)%=\sum_{h\in\Zz} \frac{e(ht)-1}{2\pi ih}\st_h
=t\st_0+ \sum_{h\not=0}\frac{e(ht)-1}{2\pi ih}\st_h
$$
where $(\st_h)_{h\in\Zz}$ is a family of independent Sato-Tate random
variables (i.e., with law given by
$\displaystyle{\frac{1}{\pi}}\sqrt{1-\displaystyle{\frac{x^2}{4}}}\, dx$ on
$[-2,2$]) and the convergence holds almost surely in the sense of
uniform convergence of symmetric partial sums.
\par
We discuss in this paper the support of this random Fourier series
$\kpath(t)$, and the arithmetic consequences of its structure. We will
denote the support by $\spt$.

\begin{theorem}\label{th-support}
  The support $\spt$ of the law of $\kpath$ in $C([0,1])$ is the set
  of all $f\in C([0,1])$ such that $f(0)=0$, $f(1)\in [-2,2]$ and such
  that the function $g(t)=f(t)-tf(1)$ satisfies
  $\widehat{g}(h)\in i\Rr$ and
$$
|\widehat{g}(h)|\leq \frac{1}{\pi |h|}
$$
for all non-zero $h\in\Zz$, where
$$
\widehat{g}(h)=\int_0^1 g(t)e(-ht)dt
$$ 
are the Fourier coefficients of $g$.
\end{theorem}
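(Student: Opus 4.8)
The plan is to prove the two inclusions $\spt\subseteq F$ and $F\subseteq\spt$, where $F$ denotes the set described in the statement. The inclusion $\spt\subseteq F$ is the easy one: I would first observe that $F$ is closed in $C([0,1])$, since for each $h$ the maps $f\mapsto f(0)$, $f\mapsto f(1)$ and $f\mapsto\widehat g(h)$ (where $g=f-tf(1)$) are continuous and each of the four defining conditions cuts out a closed set; and then check that $\kpath\in F$ almost surely. For the latter: $\kpath(0)=0$ by construction, $\kpath(1)=\st_0\in[-2,2]$ because $e(h)=1$ for $h\in\Zz$, and — integrating the almost surely uniformly convergent series term by term — the function $g(t)=\kpath(t)-t\st_0=\sum_{h\ne0}\frac{e(ht)-1}{2\pi ih}\st_h$ has $\widehat g(h)=\st_h/(2\pi ih)$ for $h\ne0$, which lies in $i\Rr$ and satisfies $|\widehat g(h)|=|\st_h|/(2\pi|h|)\le1/(\pi|h|)$. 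Since $\spt$ is the smallest closed set of full measure, this gives $\spt\subseteq F$.

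For the reverse inclusion I would set up the building blocks. Grouping opposite frequencies, write $\kpath=\sum_{h\ge0}Z_h$ with $Z_0(t)=t\st_0$ and $Z_h=\st_h\phi_h+\st_{-h}\phi_{-h}$ for $h\ge1$, where $\phi_h(t)=(e(ht)-1)/(2\pi ih)$; the $Z_h$ are independent $C([0,1])$-valued variables and $\sum_{0\le h\le m}Z_h\to\kpath$ almost surely uniformly. Because the Sato--Tate law has support $[-2,2]$ and $(s,s')\mapsto s\phi_h+s'\phi_{-h}$ is continuous, $\supp(Z_h)=\{s\phi_h+s'\phi_{-h}:s,s'\in[-2,2]\}$ and $\supp(Z_0)=\{ct:c\in[-2,2]\}$; in particular $0\in\supp(Z_h)$ for every $h$. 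I would also record that $0\in\supp(T_m)$ for $T_m:=\sum_{h>m}Z_h$ and every $m$: given $\delta>0$, almost sure convergence provides $M>m$ with $\proba(\|T_M\|<\delta/2)>0$, and then $\proba(\|T_m\|<\delta)\ge\proba(\|\sum_{m<h\le M}Z_h\|<\delta/2)\,\proba(\|T_M\|<\delta/2)>0$ by independence, the first factor being positive since $0$ lies in the support of that finite sum. Consequently every \emph{finite} sum $\varphi=\sum_{0\le h\le m}z_h$ with $z_h\in\supp(Z_h)$ lies in $\spt$, because for any $\eps>0$
$$
\proba(\|\kpath-\varphi\|<\eps)\ \ge\ \proba\Big(\big\|\textstyle\sum_{0\le h\le m}(Z_h-z_h)\big\|<\tfrac\eps2\Big)\,\proba(\|T_m\|<\tfrac\eps2)\ >\ 0.
$$

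It then remains to approximate a given $f\in F$ by such finite sums, and this is the substantive step. Put $g=f-tf(1)$; since $g(0)=g(1)=0$, $g$ is continuous on $\Rr/\Zz$, and I would apply Fejér summation: let $g^{(m)}=\sigma_m g-\sigma_m g(0)$ (with $\sigma_m g$ the $m$-th Fejér mean) and $f^{(m)}(t)=tf(1)+g^{(m)}(t)$. For $h\ne0$, $\widehat{g^{(m)}}(h)=(1-|h|/(m+1))_+\widehat g(h)$, and as the multiplier lies in $[0,1]$ this stays in $i\Rr$ with $|\widehat{g^{(m)}}(h)|\le|\widehat g(h)|\le1/(\pi|h|)$; together with $f^{(m)}(0)=0$, $f^{(m)}(1)=f(1)\in[-2,2]$ and $f^{(m)}-tf^{(m)}(1)=g^{(m)}$, this shows $f^{(m)}\in F$. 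Moreover $f^{(m)}\to f$ uniformly, by Fejér's theorem and $\sigma_m g(0)\to g(0)=0$. Finally, $g^{(m)}$ being a trigonometric polynomial, a direct computation using $\widehat{\phi_h}(k)=(\delta_{hk}-\delta_{0k})/(2\pi ih)$ (and telescoping the constant terms via $\sum_{0<|k|\le m}\widehat{g^{(m)}}(k)=g^{(m)}(0)-\widehat{g^{(m)}}(0)=-\widehat{g^{(m)}}(0)$) identifies $f^{(m)}$ with the finite sum $f(1)\,t+\sum_{h=1}^m\big(2\pi ih\,\widehat{g^{(m)}}(h)\,\phi_h-2\pi ih\,\widehat{g^{(m)}}(-h)\,\phi_{-h}\big)$, whose $h$-th summand belongs to $\supp(Z_h)$ precisely because $\widehat g(h)\in i\Rr$ and $|\widehat g(h)|\le1/(\pi|h|)$ force $2\pi ih\,\widehat{g^{(m)}}(\pm h)\in[-2,2]$. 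By the previous paragraph $f^{(m)}\in\spt$, and closedness of $\spt$ yields $f=\lim_m f^{(m)}\in\spt$.

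The hard part is the last step. If one naively sets $z_h=2\pi ih\,\widehat g(h)\,\phi_h-2\pi ih\,\widehat g(-h)\,\phi_{-h}$ for all $h$, the partial sums equal $tf(1)+S_Ng(t)-S_Ng(0)$, where $S_Ng$ is the symmetric partial Fourier sum of $g$; but $S_Ng$ need not converge uniformly, since the Fourier series of a continuous function may diverge. One therefore needs a summation kernel that both (i) recovers $g$ uniformly for every continuous $g$ and (ii) acts on Fourier coefficients by multipliers in $[0,1]$, so that each $\widehat g(h)$ stays inside the segment $\{z\in i\Rr:|z|\le1/(\pi|h|)\}$ — and Fejér's kernel satisfies both. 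Everything else (the endpoint and constant-term bookkeeping, and the elementary support facts) should be routine.
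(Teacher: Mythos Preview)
Your proof is correct and follows essentially the same route as the paper: both directions are handled identically (closedness of $F$ plus almost-sure membership for $\spt\subseteq F$; Fej\'er/Ces\`aro means to approximate any $f\in F$ by finite combinations of the basic functions $\phi_h$ for $F\subseteq\spt$). The only organizational difference is that the paper abstracts the fact that finite sums $\sum z_h$ with $z_h\in\supp(Z_h)$ lie in $\spt$ into a standalone lemma on the support of an almost surely convergent series of independent Banach-valued summands, whereas you establish the needed special case directly via the tail estimate $0\in\supp(T_m)$.
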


See Section~\ref{sec-proofs} for the proof.  From the arithmetic point
of view, what matters is the combination of this result of the next
proposition.

\begin{proposition}\label{pr-arith}
  Let $f\in C([0,1])$ be a function in the support $\spt$ of $\kpath$.
  For any $\eps>0$, we have
\begin{multline*}
  \liminf_{p\to+\infty} \frac{1}{(p-1)^2}\Bigl|\Bigl\{ (a,b)\in
  \Fpt\times\Fpt\,\mid\,\\
  \max_{0\leq j\leq p-1} \Bigl| \frac{1}{\sqrt{p}} \sum_{1\leq x\leq
    j} e\Bigl(\frac{ax+b\bar{x}}{p}\Bigr)-f\Bigl(\frac{j}{p-1}\Bigr)
  \Bigr|<\eps \Bigr\}\Bigr|>0.
\end{multline*}
Conversely, if $\in C([0,1])$ does not belong to $\spt$, then there
exists $\delta>0$ such that
$$
\lim_{p\to+\infty} \frac{1}{(p-1)^2}\Bigl|\Bigl\{ (a,b)\in
\Fpt\times\Fpt\,\mid\,\\
\max_{0\leq j\leq p-1} \Bigl| \frac{1}{\sqrt{p}} \sum_{1\leq x\leq j}
e\Bigl(\frac{ax+b\bar{x}}{p}\Bigr)-f\Bigl(\frac{j}{p-1}\Bigr)
\Bigr|<\delta \Bigr\}\Bigr|=0.
$$
\end{proposition}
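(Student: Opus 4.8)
The plan is to derive Proposition~\ref{pr-arith} as a soft consequence of the functional limit theorem of~\cite{ks} --- which asserts that the $C([0,1])$-valued random variables $K_p$, defined on $\Fpt\times\Fpt$ with the uniform measure $\proba_p$, converge in law to $\kpath$ --- together with the fact that, by Theorem~\ref{th-support}, $\spt$ is the topological support of the law of $\kpath$. The engine is the portmanteau theorem. The one point that is not completely automatic is that the quantity in the statement is the maximum of $|K_p(a,b)(t)-f(t)|$ over the \emph{vertices} $t=j/(p-1)$ only, rather than the uniform distance $\|K_p(a,b)-f\|_\infty:=\sup_{0\le t\le 1}|K_p(a,b)(t)-f(t)|$; so the first step compares the two.

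\textbf{Step 1.} Write $D_p(a,b)$ for the maximum over $0\le j\le p-1$ of $|K_p(a,b)(j/(p-1))-f(j/(p-1))|$, i.e. the quantity appearing in the Proposition, so that $\proba_p(D_p<r)$ is exactly the normalized cardinality in the statement. Since $K_p(a,b)$ is piecewise linear with vertices exactly at the points $j/(p-1)$, one has $D_p(a,b)\le\|K_p(a,b)-f\|_\infty$ trivially. Conversely, on each subinterval $[j/(p-1),(j+1)/(p-1)]$ the function $K_p(a,b)$ varies by at most its increment there, of modulus $1/\sqrt p$, while $f$ varies by at most $\omega_f(1/(p-1))$, where $\omega_f$ is the modulus of continuity of the uniformly continuous function $f$; hence $\|K_p(a,b)-f\|_\infty\le D_p(a,b)+\eta_p$ with $\eta_p:=1/\sqrt p+\omega_f(1/(p-1))\to 0$, uniformly in $(a,b)$. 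Consequently, for every $r>0$,
$$
\proba_p\bigl(\|K_p-f\|_\infty<r\bigr)\ \le\ \proba_p\bigl(D_p<r\bigr)\ \le\ \proba_p\bigl(\|K_p-f\|_\infty<r+\eta_p\bigr).
$$

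\textbf{Step 2 (the two assertions).} For the first one, let $f\in\spt$ and $\eps>0$. The set $U_\eps=\{g\in C([0,1]):\|g-f\|_\infty<\eps\}$ is open, so convergence in law of $K_p$ to $\kpath$, the portmanteau theorem, and the left inequality of Step~1 give
$$
\liminf_{p\to+\infty}\proba_p\bigl(D_p<\eps\bigr)\ \ge\ \liminf_{p\to+\infty}\proba_p\bigl(K_p\in U_\eps\bigr)\ \ge\ \proba\bigl(\kpath\in U_\eps\bigr)>0,
$$
the last inequality because $f$ belongs to the support of the law of $\kpath$, so every open ball centred at $f$ has positive mass. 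For the converse, let $f\notin\spt$; since $\spt$ is the support and $C([0,1])$ is a metric space, there is $\delta>0$ with $\proba(\|\kpath-f\|_\infty<2\delta)=0$, hence also $\proba(\|\kpath-f\|_\infty\le\tfrac{3}{2}\delta)=0$. For all $p$ with $\eta_p\le\delta/2$, the right inequality of Step~1 gives $\{D_p<\delta\}\subseteq\{\|K_p-f\|_\infty\le\tfrac{3}{2}\delta\}$, and the latter event is $\{K_p\in C\}$ for the closed set $C=\{g:\|g-f\|_\infty\le\tfrac{3}{2}\delta\}$; so the portmanteau theorem for closed sets yields
$$
\limsup_{p\to+\infty}\proba_p\bigl(D_p<\delta\bigr)\ \le\ \proba\bigl(\kpath\in C\bigr)\ =\ 0.
$$

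I do not expect a genuine obstacle: all the real content sits in the functional limit theorem of~\cite{ks} and in Theorem~\ref{th-support}. The only things to watch are the uniform comparison of $D_p$ with $\|K_p-f\|_\infty$ in Step~1 (keeping in mind that $K_p$ is merely piecewise linear, with jumps $O(p^{-1/2})$, and that $f$ is merely continuous) and the bookkeeping of strict versus non-strict inequalities, so that open sets are used for the $\liminf$ lower bound and closed sets for the $\limsup$ upper bound in the portmanteau theorem.
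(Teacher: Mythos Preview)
Your proof is correct and follows the same approach as the paper: both deduce the result from the functional limit theorem of~\cite{ks} via the portmanteau theorem, using the open-set half for the first assertion and the closed-set half for the converse. Your Step~1 is in fact slightly more careful than the paper's argument: the paper only uses the trivial inequality $D_p\le\|K_p-f\|_\infty$ (``sampling the supremum at the points $t_j$'') for the first assertion, and for the converse it passes directly from $\limsup_p\proba_p(K_p\in C)=0$ for a closed ball $C$ to the statement about $D_p$ without spelling out the reverse comparison $\|K_p-f\|_\infty\le D_p+\eta_p$ that you provide. So your version makes the converse step fully explicit where the paper is elliptical.
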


As an example, we obtain:

\begin{corollary}\label{cor-small}
For any $\eps>0$, we have
$$
\liminf_{p\to+\infty} \frac{1}{(p-1)^2}\Bigl|\Bigl\{ (a,b)\in
\Fpt\times\Fpt\,\mid\, \max_{0\leq j\leq p-1} \Bigl|
\frac{1}{\sqrt{p}} \sum_{1\leq x\leq j}
e\Bigl(\frac{ax+b\bar{x}}{p}\Bigr) \Bigr|<\eps \Bigr\}\Bigr|>0.
$$
\end{corollary}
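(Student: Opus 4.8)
The plan is to obtain Corollary~\ref{cor-small} as the special case $f\equiv 0$ of Proposition~\ref{pr-arith}. Indeed, if $f$ is the zero function then $f(j/(p-1))=0$ for every $j$, so the set counted in the first assertion of Proposition~\ref{pr-arith} is precisely the set counted in the corollary. Thus it is enough to check that the zero function belongs to the support $\spt$ of $\kpath$, and the positivity of the $\liminf$ then follows at once.

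To verify $0\in\spt$ I would simply apply the explicit description of Theorem~\ref{th-support}. The function $f\equiv 0$ satisfies $f(0)=0$ and $f(1)=0\in[-2,2]$; moreover the associated function $g(t)=f(t)-tf(1)$ is identically zero, so each Fourier coefficient $\widehat{g}(h)$ equals $0$, which lies in $i\Rr$ and trivially satisfies $|\widehat{g}(h)|=0\leq \tfrac{1}{\pi|h|}$ for every non-zero $h\in\Zz$. Hence all the conditions of Theorem~\ref{th-support} are met and $0\in\spt$, which is all that is needed.

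There is essentially no obstacle in this argument: the whole content sits in Theorem~\ref{th-support} and Proposition~\ref{pr-arith}. If a more self-contained phrasing is preferred, one can unwind the definitions: $0\in\spt$ means exactly that $\proba(\|\kpath\|_{\infty}<\eps)>0$ for every $\eps>0$, and since $\{h\in C([0,1]) : \|h\|_{\infty}<\eps\}$ is open, the functional limit theorem of~\cite{ks} (together with the portmanteau inequality already invoked in the proof of Proposition~\ref{pr-arith}) gives $\liminf_p \proba_p(\|K_p\|_{\infty}<\eps)\geq \proba(\|\kpath\|_{\infty}<\eps)>0$, which is the assertion of the corollary. One could record in passing that the same reasoning shows that every straight segment $f(t)=ct$ with $|c|\leq 2$ lies in $\spt$, but this is not needed here.
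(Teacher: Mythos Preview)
Your argument is correct and matches the paper's own: it obtains the corollary by checking that the zero function lies in $\spt$ (via Theorem~\ref{th-support}, since $u(0)=0$) and then invoking Proposition~\ref{pr-arith} with $f=0$. The paper records this in its first example, noting more generally that $f(t)=\alpha t$ lies in $\spt$ for $|\alpha|\leq 2$ and specializing to $\alpha=0$.
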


Our goal, after proving these results, will be to illustrate them. We
begin in Section~\ref{sec-structure} by spelling out some properties
of the support of $\kpath$, some of which can be interpreted as
``hidden symmetries'' of the Kloosterman paths. Then we discuss some
concrete examples that we find interesting, especially various
polygonal paths in Section~\ref{sec-polygons}. In
Section~\ref{sec-fourier}, we consider functions \emph{not} in $\spt$
which can be brought to $\spt$ by change of variable.  We can show:

\begin{proposition}\label{pr-repar}
  Let $f\in C([0,1])$ be a \emph{real-valued} function such that
  $f(t)+f(1-t)=f(1)$ for all $t\in [0,1]$ and $|f(1)|\leq 2$.  Then
  there exists an increasing homeomorphism
  $\varphi\colon [0,1]\to [0,1]$ such that $\varphi(1-t)=1-\varphi(t)$
  for all $t$ and
% such that $\varphi(1-t)=1-\varphi(t)$ for all $t$, and
  $f\circ \varphi\in\spt$.
\end{proposition}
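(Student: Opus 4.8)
The plan is to use the characterization of $\spt$ in Theorem~\ref{th-support} and reduce the statement to a quantitative estimate on Fourier coefficients after a change of variable. Given a real-valued $f$ with $f(t)+f(1-t)=f(1)$ and $|f(1)|\le 2$, set $f(1)=2\cos\theta$ for some $\theta$. For any increasing homeomorphism $\varphi$ of $[0,1]$ with $\varphi(1-t)=1-\varphi(t)$, the composition $g_\varphi(t)=(f\circ\varphi)(t)-t f(1)$ inherits the antisymmetry $g_\varphi(t)+g_\varphi(1-t)=f(1)-f(1)=0$, i.e.\ $g_\varphi(1-t)=-g_\varphi(t)$. I would first record the elementary consequence that this antisymmetry forces $\widehat{g_\varphi}(h)=-e(-h)\overline{\widehat{g_\varphi}(-h)}$-type relations; more usefully, writing $g_\varphi(t)=\sum_h \widehat{g_\varphi}(h)e(ht)$ and using that $g_\varphi$ is real and odd about $t=1/2$, one gets that the coefficients $\widehat{g_\varphi}(h)$ are purely imaginary for all $h$ automatically (this is where the symmetry hypothesis $f(t)+f(1-t)=f(1)$ is used, and it means the condition $\widehat{g}(h)\in i\Rr$ in Theorem~\ref{th-support} is free). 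So the only thing left to arrange is the size condition $|\widehat{g_\varphi}(h)|\le \tfrac{1}{\pi|h|}$ for all $h\ne 0$, together with $(f\circ\varphi)(0)=0$, $(f\circ\varphi)(1)\in[-2,2]$, which hold since $\varphi(0)=0$, $\varphi(1)=1$ and $f$ itself maps into $[-2,2]$ (as $f$ is real with $f(t)+f(1-t)=f(1)$, hmm — actually I would need to check $f$ is bounded by $2$; this is not assumed, so I should instead only claim $(f\circ\varphi)(1)=f(1)\in[-2,2]$ and not worry about intermediate values, since membership in $\spt$ only constrains the endpoints through $f(1)$).

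The heart of the argument is then: construct $\varphi$ so that $f\circ\varphi$ is, roughly, the Kloosterman-type path of a fixed angle. Concretely, recall that the "extreme" elements of $\spt$ are built from the series $\kpath$ with all $\st_h=2\cos\theta_h$; the simplest candidate is to ask that $g_\varphi$ have $\widehat{g_\varphi}(h)=\tfrac{e(h)-1}{2\pi i h}\cdot 0 = 0$ for $h\ne 0$? That would force $g_\varphi$ linear, hence $f\circ\varphi$ linear, which is generally impossible. Instead I would aim for the profile
$$
(f\circ\varphi)(t) \;=\; t f(1) \;+\; \frac{1}{\pi}\,\psi(t)
$$
where $\psi$ is the specific function whose Fourier coefficients are exactly $\tfrac{e(ht)-1}{2ih}\big|$-normalized to have $|\widehat\psi(h)|=\tfrac{1}{|h|}$ — i.e.\ $\psi$ is (a rescaling of) the sawtooth/triangle wave, which is continuous and antisymmetric about $1/2$. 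Then the target path $h(t)=tf(1)+\tfrac{1}{\pi}\psi(t)$ lies on the boundary of $\spt$. The remaining task is to show $f$ and $h$ are related by an increasing, symmetry-respecting homeomorphism $\varphi$: this is a one-dimensional "rearrangement" statement. Since both $f$ and $h$ are real-valued continuous functions on $[0,1]$ with the same endpoint behavior and the same antisymmetry, and $h$ is strictly monotone on $[0,1/2]$ and on $[1/2,1]$ (choose $\psi$ to be the triangle wave, which is strictly monotone on each half), one can take $\varphi|_{[0,1/2]} = h^{-1}\circ f$ suitably interpreted — but $f$ need not be monotone, so this fails. This is the main obstacle.

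To overcome it, I would not insist that $f\circ\varphi$ equal a boundary element, but only that it land in $\spt$, which is a large convex-ish region. The key flexibility: by choosing $\varphi$ highly oscillatory (many small back-and-forth reparametrizations near points where $f$ is locally non-monotone are \emph{not} allowed since $\varphi$ must be an increasing homeomorphism), the honest move is to exploit that composing with an increasing homeomorphism can make the Fourier coefficients of $(f\circ\varphi)(t)-tf(1)$ \emph{small}. Precisely, I would show: for any $\eta>0$ there is an increasing homeomorphism $\varphi$ with $\varphi(1-t)=1-\varphi(t)$ such that $|\widehat{g_\varphi}(h)|\le \eta/|h|$ for $|h|\le H(\eta)$ and $\le 1/(\pi|h|)$ for all $h$, by making $\varphi$ spend "most" of the parameter interval near $t=0$, $t=1/2$, $t=1$ so that $g_\varphi$ is close to a step-like function with small $L^1$ norm against $e(-ht)$... . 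The cleanest route, and the one I expect the authors take, is a soft argument: the set of $\varphi$-reparametrizations of $f$ (respecting the symmetry) has closure containing the constant-speed reparametrization of the \emph{linear} function $tf(1)$ — because one can slow down near every point — and $tf(1)\in\spt$ (its $g$ is $0$); then since $\spt$ is closed (it is a support) and the reparametrization $f\mapsto f\circ\varphi$ is continuous, and $\spt$ has nonempty interior relative to the affine constraints, a small perturbation lands inside. The hard part will be making this "slow down to approximate the linear path" rigorous while keeping $\varphi$ a genuine homeomorphism with the required symmetry, and confirming that the limiting linear element is interior enough to $\spt$ to absorb the error; I would handle it by an explicit family $\varphi_\lambda$ concentrating mass and an estimate $\widehat{g_{\varphi_\lambda}}(h)\to 0$ as $\lambda\to\infty$, uniformly on bounded $h$-ranges, combined with the trivial bound $|\widehat{g_{\varphi_\lambda}}(h)|\le \tfrac{1}{\pi|h|}$ coming from the total variation / modulus of $f\circ\varphi_\lambda$ being controlled — wait, that trivial bound is exactly what is \emph{not} automatic, so in fact the genuinely delicate point is producing a $\varphi$ for which $|\widehat{g_\varphi}(h)|\le \tfrac1{\pi|h|}$ holds for \emph{all} large $h$ simultaneously, and I expect this to require choosing $\varphi$ piecewise-linear with carefully tuned breakpoints so that $f\circ\varphi$ is itself piecewise-linear with controlled slopes, reducing everything to the polygonal-path computations of Section~\ref{sec-polygons}.
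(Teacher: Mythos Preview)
Your reduction is correct: once $\varphi$ is symmetric and increasing, the function $g_\varphi=(f\circ\varphi)-tf(1)$ is real and odd about $t=1/2$, so its Fourier coefficients are automatically in $i\Rr$, and the only issue is the uniform size condition $|\widehat{g_\varphi}(h)|\le(\pi|h|)^{-1}$ for all $h\not=0$. You also correctly identify the real obstacle, namely that one needs this bound for \emph{all} $h$ simultaneously, not just for bounded ranges. But none of the three strategies you sketch closes this gap. The rearrangement $\varphi=h^{-1}\circ f$ fails for the reason you state. The ``slow down to approximate $tf(1)$'' approach cannot work as written: the linear function $tf(1)$ is not in the interior of $\spt$ in any useful sense (the set $\spt$ has empty interior in $C([0,1])$), and as you yourself note, the crucial tail bound $|\widehat{g_{\varphi_\lambda}}(h)|\le(\pi|h|)^{-1}$ for large $h$ is exactly what is missing and is not supplied by any limiting argument. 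Finally, the piecewise-linear suggestion does not work either: if $\varphi$ is piecewise-linear, $f\circ\varphi$ is \emph{not} piecewise-linear in general (since $f$ is an arbitrary continuous function), so the polygonal computations of Section~\ref{sec-polygons} do not apply.

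The missing idea is a genuine theorem from Fourier analysis, due to Sahakian: for any real-valued continuous $g$ on $[0,1]$ with $g(0)=0$ and any $\eps>0$, there exists an increasing homeomorphism $\varphi$ such that $|\widehat{u(g\circ\varphi)}(h)|\le\eps/|h|$ for all $h\not=0$; moreover, if $g(t)+g(1-t)=g(1)$, one can take $\varphi$ symmetric. The mechanism is the Faber--Schauder expansion: using the intermediate value theorem (this is where real-valuedness is essential), Sahakian constructs $\varphi$ so that the Faber--Schauder expansion of $g\circ\varphi$ has at most one nonzero coefficient per dyadic level $m$, each of size at most $\eps/C$. Summing the elementary bounds $\omega_{\Lambda_{m,j}}(\delta)\ll\min(2^m\delta^2,2^{-m})$ on the integral modulus of smoothness then gives $\omega_{u(g\circ\varphi)}(\delta)\le 4\eps\delta$, and the standard inequality $|\widehat{f}(h)|\le\tfrac14\omega_f(1/|h|)$ yields the desired uniform bound. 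Taking $\eps=1/\pi$ and invoking Theorem~\ref{th-support} finishes the proof. This Faber--Schauder construction is the substantive content you are missing; the proposition is essentially a restatement of Sahakian's theorem in the language of $\spt$.
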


We will see that this is related to some classical problems of Fourier
analysis around the Bohr-P\'al Theorem.

We also highlight two questions for which we do not know the answer at
this time, and one interesting analogue problem:
\begin{enumerate}
\item Is there a space-filling curve in the support $\spt$ of
  $\kpath$?
\item Does Proposition~\ref{pr-repar} hold for complex-valued
  functions $f$ with $f(t)+\overline{f(1-t)}=f(1)$? (A positive
  answer would also give a positive answer to (1)).
\item What can be said about the support of the paths of partial
  \emph{character sums} (as in, e.g., the paper~\cite{bggk} of Bober,
  Goldmakher, Granville and Koukoulopoulos)?
\end{enumerate}
\par
\medskip
\par
\textbf{Acknowledgments}. The computations were performed using
\textsc{Pari/GP}~\cite{gp} and \textsc{Julia}~\cite{julia}; the plots
were produced using the \textsc{Gadfly.jl} package.
\par
\medskip
\par
\subsection*{Notation.} 
We denote by $|X|$ the cardinality of a set.  If $X$ is any set and
$f\colon X\to\Cc$ any function, we write (synonymously) $f\ll g$ for
$x\in X$, or $f=O(g)$ for $x\in X$, if there exists a constant
$C\geq 0$ such that $|f(x)|\leq Cg(x)$ for all $x\in X$. The ``implied
constant'' is any admissible value of $C$. It may depend on the set
$X$ which is always specified or clear in context.
% We write $f\asymp g$ if $f\ll g$ and $g\ll f$ are both true.
\par
We denote by $C([0,1])$ the space of all
continuous complex-valued functions on $[0,1]$.
\par
For any probability space $(\Omega,\Sigma,\proba)$, we denote by
$\proba(A)$ the probability of some event $A$, and for a $\Cc$-valued
random variable $X$ defined on $\Omega$, we denote by $\expect(X)$ the
expectation when it exists. We sometimes use different probability
spaces, but often keep the same notation for all expectations and
probabilities.

\section{Computation of the support}
\label{sec-proofs}

We begin with the proof of Theorem~\ref{th-support}. This uses a
standard probabilistic lemma, for which we include a proof for
completeness.

\begin{lemma}\label{lm-support}
  Let $B$ be a separable real or complex Banach space.  Let
  $(X_n)_{n\geq 1}$ be a sequence of \emph{independent} $B$-valued
  random variables such that the series $X=\sum X_n$ converges almost
  surely.
  % \footnote{\ By a result of P. L\'evy, this is equivalent in that
  % case to
  % convergence in law.}
  The support of the law of $X$ is the closure of the set of all
  convergent series of the form $\sum x_n$, where $x_n$ belongs to the
  support of the law of $X_n$ for all $n\geq 1$.
\end{lemma}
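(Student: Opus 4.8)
The plan is to prove the two inclusions separately. Write $\mu$ for the law of $X$, $\mu_n$ for the law of $X_n$, and set $K_n=\supp(\mu_n)\subseteq B$; let $S$ be the set of convergent series $\sum_n x_n$ with $x_n\in K_n$ for every $n$, so that the assertion is $\supp(\mu)=\overline{S}$. I will use repeatedly the elementary fact that, since $B$ is separable, a $B$-valued random variable lies in the support of its law almost surely. I also set $Y_N=\sum_{n\le N}X_n$ and $R_N=X-Y_N=\sum_{n>N}X_n$; by the standing hypothesis that $\sum_n X_n$ converges almost surely, $R_N\to 0$ almost surely, hence in probability.

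For the inclusion $\overline{S}\subseteq\supp(\mu)$: since $\supp(\mu)$ is closed it suffices to treat a fixed $x=\sum_n x_n$ with $x_n\in K_n$, and to show $\proba(\|X-x\|<\varepsilon)>0$ for every $\varepsilon>0$. The plan is to pick $N$ large enough that both $\|\sum_{n>N}x_n\|<\varepsilon/3$ (possible because the series converges) and $\proba(\|R_N\|<\varepsilon/3)>0$ (possible because $R_N\to0$ in probability). Then, using $\|X-x\|\le\|Y_N-\sum_{n\le N}x_n\|+\|R_N\|+\|\sum_{n>N}x_n\|$ together with the independence of $Y_N$ and $R_N$,
$$
\proba(\|X-x\|<\varepsilon)\ \ge\ \proba\Bigl(\bigl\|Y_N-\textstyle\sum_{n\le N}x_n\bigr\|<\tfrac{\varepsilon}{3}\Bigr)\,\proba\bigl(\|R_N\|<\tfrac{\varepsilon}{3}\bigr),
$$
and the first factor is at least $\prod_{n\le N}\proba\bigl(\|X_n-x_n\|<\varepsilon/(3N)\bigr)>0$ by independence of $X_1,\dots,X_N$ and $x_n\in K_n$. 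This is the only place where independence is used.

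For the inclusion $\supp(\mu)\subseteq\overline{S}$, independence plays no role. Let $\Omega_0$ be the event that $\sum_n X_n$ converges and $X_n\in K_n$ for all $n$; as a countable intersection of almost sure events it has probability $1$. Given $x\in\supp(\mu)$ and $\delta>0$, the definition of support gives $\proba(\|X-x\|<\delta)>0$, hence $\proba\bigl(\Omega_0\cap\{\|X-x\|<\delta\}\bigr)>0$, so this event is nonempty; for any $\omega_0$ in it, the sequence $(X_n(\omega_0))_n$ satisfies $X_n(\omega_0)\in K_n$ for all $n$, has convergent sum, and $\|\sum_n X_n(\omega_0)-x\|<\delta$. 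Thus $x$ lies within $\delta$ of $S$ for every $\delta>0$, i.e. $x\in\overline{S}$.

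I do not expect a real obstacle here. The substantive point is the two-stage use of independence in the first inclusion (splitting off the tail $R_N$, then distributing over $X_1,\dots,X_N$), and the accompanying bookkeeping that a single $N$ can be chosen to control simultaneously the deterministic tail $\sum_{n>N}x_n$ and the random tail $R_N$; the second inclusion is essentially immediate once one observes that a typical sample path of $(X_n)_n$ is itself a series of the required form.
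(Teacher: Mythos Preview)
Your proof is correct and follows essentially the same approach as the paper's: decompose $X$ into a partial sum and a tail, use independence of the two pieces together with convergence in probability of the tail for one inclusion, and use that each $X_n$ lies a.s.\ in its own support for the other. The only cosmetic differences are that the paper uses an $\eps/2\eps$ splitting and invokes continuity of addition to identify the support of the partial sum, whereas you use an $\eps/3$ splitting and factor the partial-sum probability explicitly as a product over $n\le N$.
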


\begin{proof}
  For $N\geq 1$, we write
$$
S_N=\sum_{n=1}^N X_n,\quad\quad R_N=X-S_N.
$$
The variables $S_N$ and $R_N$ are independent. It is elementary (by
composition of the random vector $(X_1,\ldots,X_N)$ with the
continuous addition map) that the support of $S_N$ is the closure of
the set of elements $x_1+\cdots+x_N$ with $x_n\in\supp(X_n)$ for
$1\leq n\leq N$.
\par
We will prove that all convergent series $\sum x_n$ with
$x_n\in\supp(X_n)$ belong to the support of $X$, hence the closure of
this set is contained in the support of $X$.  Thus let $x=\sum x_n$ be
of this type. Let $\eps>0$ be fixed.
\par
For all $N$ large enough, we have 
$$
\Bigl\|\sum_{n>N}x_n\Bigr\|<\eps,
$$
and it follows that $x_1+\cdots +x_N$ belongs to the intersection of
the support of $S_N$ (by the previous remark) and of the open ball
$U_{\eps}$ of radius $\eps$ around $x$. Hence
$$
\proba(S_N\in U_{\eps})>0
$$
for all $N$ large enough. 
% ($U_{\eps}$ is an open neighborhood of some element in the support
% of $S_N$).
\par
Now the almost sure convergence implies (by the dominated convergence
theorem, for instance) that $\proba(\|R_N\|>\eps)\ra 0$ as $N\ra
+\infty$. Therefore, taking $N$ suitably large, we get
\begin{align*}
\proba(\|X-x\|<2\eps)&\geq \proba(\|S_N-x\|<\eps\text{ and }
\|R_N\|<\eps)\\
&= \proba(\|S_N-x\|<\eps)\proba(\|R_N\|<\eps)>0
\end{align*}
(by independence).  Since $\eps$ is arbitrary, this shows that
$x\in \supp(X)$, as was to be proved.
% \par
% Conversely, let $x\in\supp(X)$. For any $\eps>0$, we have
% $$
% \proba\Bigl(\Bigl\|\sum_{n\geq 1}X_n-x\Bigr\|<\eps\Bigr)>0.
% $$
% Since, for any $n_0\geq 1$, we have
% $$
% \proba\Bigl(\Bigl\|\sum_{n\geq 1}X_n-x\Bigr\|<\eps\text{ and }
% X_{n_0}\notin\supp(X_{n_0})\Bigr)=0,
% $$
% this means in fact that
% $$
% \proba\Bigl(\Bigl\|\sum_{n\geq 1}X_n-x\Bigr\|<\eps \text{ and }
% X_n\in\supp(X_n)\text{ for all } n \Bigr)>0.
% $$
% In particular, we can find $x_n\in\supp(X_n)$ such that  the series
% $\sum x_n$ converges and
% $$
% \Bigl\|\sum_{n\geq 1}x_n-x\Bigr\|<\eps,
% $$
% and hence $x$ belongs to the closure of the set of convergent series
% $\sum x_n$ with $x_n$ in the support of $X_n$ for all $n$.
\par
The converse inclusion (which we do not need anyway) is elementary
since for any $n$, we have $\proba(X_n\notin\supp(X_n))=0$.
\end{proof}

This almost immediately proves Theorem~\ref{th-support}, but some care
is needed since not all continuous periodic functions are the sum of
their Fourier series in $C([0,1]$).

\begin{proof}[Proof of Theorem~\ref{th-support}]
  Denote by $\widetilde{\spt}$ the set described in the statement. Then
  $\widetilde{\spt}$ is closed in $C([0,1])$, since it is the
  intersection of closed sets. Almost surely, a sample function
  $f\in C([0,1])$ of the random process $\kpath$ is given by a
  uniformly convergent series
$$
f(t)=\alpha_0t+ \sum_{h\not=0}\frac{e(ht)-1}{2\pi ih}\alpha_h
$$
(in the sense of symmetric partial sums) for some real numbers
$\alpha_h$ such that $|\alpha_h|\leq 2$ (\cite[Th. 1.1 (1)]{ks}). The
uniform convergence implies
$$
\widehat{g}(h)=\frac{\alpha_h}{2i\pi h},\quad\text{ where }\quad
g(t)=f(t)-tf(1),
$$
for $h\not=0$. Hence the function $f$ belongs to
$\widetilde{\spt}$. Consequently, the support of $\kpath$ is contained in
$\widetilde{\spt}$.
\par
We now prove the converse inclusion.  By Lemma~\ref{lm-support}, the
support $\spt$ contains the set of continuous functions with uniformly
convergent (symmetric) expansions
$$
t\alpha_0+ \sum_{h\not=0}\frac{e(ht)-1}{2\pi ih}\alpha_h
$$
where $\alpha_h\in [-2,2]$ for all $h\in\Zz$.  In particular, since
$0$ belongs to the support of the Sato-Tate measure, $\spt$ contains
all finite sums of this type.
\par
Let $f\in \widetilde{\spt}$. We have 
$$
g(t)=f(t)-tf(1)=\lim_{N\to +\infty} \sum_{|h|\leq N}
\widehat{g}(h)e(ht)\Bigl(1-\frac{|h|}{N}\Bigr),
$$
in $C([0,1])$, by the uniform convergence of Cesàro means of the
Fourier series of a continuous periodic function. Evaluating at $0$,
where $g(0)=0$, and subtracting yields
\begin{align*}
  f(t)&=tf(1)+\lim_{N\to +\infty} \sum_{|h|\leq N}
        \widehat{g}(h)(e(ht)-1)\Bigl(1-\frac{|h|}{N}\Bigr)\\
      &= tf(1)+\lim_{N\to
        +\infty} \sum_{|h|\leq N} \frac{\alpha_h}{2i\pi
        h}(e(ht)-1)\Bigl(1-\frac{|h|}{N}\Bigr)
\end{align*}
in $C([0,1])$, where $\alpha_h=2i\pi h\widehat{g}(h)$ for
$h\not=0$. Then $\alpha_h\in\Rr$ and $|\alpha_h|\leq 2$ by the
assumption that $f\in\widetilde{\spt}$, so each function
$$
tf(1)+\sum_{1\leq|h|\leq N}\frac{e(ht)-1}{2\pi
  ih}\alpha_h\Bigl(1-\frac{|h|}{N}\Bigr),
$$
belongs to $\spt$, by the result we recalled. Since $\spt$ is closed,
we conclude that $f$ also belongs to $\spt$.
\end{proof}

% \begin{corollary}
% Let $\eps>0$. We have
% $$
% \liminf_{p\to+\infty}
% \frac{1}{(p-1)^2}\Bigl|\Bigl\{
% (a,b)\in \Fpt\times\Fpt\,\mid\,
% \max_{0\leq j\leq p-1}
% \Bigl|
% \frac{1}{\sqrt{p}}
% \sum_{1\leq x\leq j}
% e\Bigl(\frac{ax+b\bar{x}}{p}\Bigr)
% \Bigr|<\eps
% \Bigr\}\Bigr|>0.
% $$
% \end{corollary}

% \begin{proof}
% Indeed the function $0$ belongs to the support of $\kpath(t)$.
% \end{proof}

We now prove the arithmetic statement of Proposition~\ref{pr-arith}.

\begin{proof}[Proof of Proposition~\ref{pr-arith}]
  Assume $f\in\spt$.  Since the $C([0,1])$-valued random variables
  $K_p$ converge in law to $\kpath$ as $p\to +\infty$
  (\cite[Th. 1.5]{ks}), a standard equivalent form of convergence in
  law implies that for any open set $U\subset C([0,1])$, we have
$$
\liminf_{p\to +\infty}\proba_p(K_p\in U)\geq \proba(\kpath\in U)
$$
(see~\cite[Th. 2.1, (i) and (iv)]{billingsley}).  If $f\in \spt$ and $U$
is an open neighborhood of $f$ in $C([0,1])$, then by definition we
have $\proba(\kpath\in U)>0$, and therefore
$$
\liminf_{p\to +\infty}\proba_p(K_p\in U)\geq \proba(\kpath\in U)>0.
$$
Take for $U$ the open ball of radius $\eps>0$ around $f$ so that
$K_p\in U$ if and only if
$$
\sup_{t\in [0,1]}|K_p(t)-f(t)|<\eps.
$$
Sampling the supremum at the points $t_j=j/(p-1)$ for
$0\leq j\leq p-1$, we deduce
$$
\liminf_{p\to+\infty}\proba_p\Bigl(\Bigl|K_p\Bigl(\frac{j}{p-1}\Bigr)-
f\Bigl(\frac{j}{p-1}\Bigr)\Bigr|<\eps\Bigr)>0,
$$
which translates exactly to the first statement.
\par
Conversely, if $f\notin \spt$, there exists a neighborhood $U$ of $f$
such that $\proba(\kpath\in U)=0$. For some $\delta>0$, this
neighborhood contains the closed ball $C$ of radius $\delta$ around
$f$, and by~\cite[Th. 2.1., (i) and (iii)]{billingsley}, we have
$$
0\leq \limsup_{p\to+\infty}\proba_p(K_p\in C)\leq \proba(\kpath\in
C)=0,
$$
hence the second assertion.
\end{proof}

\section{Structure and symmetries of the support}\label{sec-structure}

% This is most natural for arithmetic (see
% Corollary~\ref{cor-arith} below), but we will also consider the
% geometric shapes, where the freedom of reparameterizing a path leads
% to other questions.

% The functions in the support are therefore built out of functions
% $\varphi\colon [0,1]\to \Cc$ such that $\varphi(0)=\varphi(1)=0$
% % which take a purely imaginary value at $t=1/2$, 
% and whose graph is symmetric with respect to the imaginary axis; to
% these functions we add a ``stretch'' factor $\alpha_0t$.

% \par
% For such a function $f$, we have $\Reel(f(\demi))=\demi f(1)$,
% $$
% \int_0^1f(t)dt\in i\Rr,
% $$
% and the function $\varphi$ extended to a $1$-periodic function on
% $\Rr$ satisfies $\varphi(-t)=-\overline{\varphi(t)}$.

% Let $\fct{F}\subset C([0,1])$ denote the \emph{real} Banach space of
% all complex-valued continuous functions $f$ on $[0,1]$ such that
% \begin{equation}\label{eq-symmetry}
% f(t)-\Reel(f(\demi))=-\overline{f(1-t)}+\Reel(f(\demi))
% \end{equation}
% for $0\leq t\leq 1$ (this implies, in particular, that the image of
% $f$ is a subset of $\Cc$ that is symmetric with respect to the line
% $\Reel(z)=\demi \Reel(f(1))$ in $\Cc$).
% \par
% Let $\fct{F}_0$ denote the real subspace of $f\in\fct{F}$ such
% that $f(0)=0$. For $f\in \fct{F}_0$, we have $f(1)\in\Rr$.

We denote by $u$ the continuous linear map $C([0,1])\to C([0,1])$ such
that
$$
u(f)(t)=f(t)-f(1)t
$$
for all $t\in [0,1]$.
\par
Let $\fct{F}_0\subset C([0,1])$ denote the \emph{real} Banach space of
all complex-valued continuous functions on $[0,1]$ such that
\begin{equation}\label{eq-symmetry}
  f(t)+\overline{f(1-t)}=f(1)
\end{equation}
for all $t\in [0,1]$. This condition implies (taking $t=1/2$) that
$2\Reel(f(1/2))=f(1)$, hence in particular that $f(1)\in\Rr$. Taking
$t=0$, it follows also that $f(0)=0$. Writing the symmetry
relation~(\ref{eq-symmetry}) as
\begin{equation}\label{eq-sym-2}
  f(t)-\Reel(f(\demi))=-\overline{f(1-t)}+\Reel(f(\demi)),
\end{equation}
we see also that $\fct{F}_0$ is the subspace of functions satisfying
$f(0)=0$ among the space $\fct{F}$ of all complex-valued continuous
functions $f$ on $[0,1]$ that satisfy~(\ref{eq-sym-2}). This means, in
particular, that the image $f([0,1])\subset\Cc$ is symmetric with
respect to the line $\Reel(z)=\demi \Reel(f(1))$ in $\Cc$.
\par
The linear map $u$ induces by restriction an $\Rr$-linear map
$u\colon \fct{F}_0\to\fct{F}_0$.
%  such that
% $$
% u(f)(t)=f(t)-tf(1).
% $$
%% Well defined on F iff f(1) is real.
This is a continuous projection on $\fct{F}_0$ with $1$-dimensional
kernel spanned by the identity $t\mapsto t$, and with image the
subspace $\fct{F}_{1}\subset \fct{F}_0$ of functions such that
$f(1)=0$.

Theorem~\ref{th-support} implies that $\spt\subset \fct{F}_0$, where
the symmetry condition~(\ref{eq-symmetry}) follows from the fact that
the Fourier coefficients of the function $g=u(f)$ are purely
imaginary. More precisely, we have the following criterion that we
will use to check that concretely given functions in $\fct{F}_0$ are
in $\spt$:

\begin{lemma}\label{lm-expansion}
  Let $f\in C([0,1])$. Then $f\in \fct{F}_0$ if and only if there
  exist real numbers $\alpha_h$ for $h\in \Zz$ such that
$$
f(t)=\alpha_0t+ \lim_{N\to+\infty} \sum_{1\leq |h|\leq N}\alpha_h
\frac{e(ht)-1}{2i\pi h}\Bigl(1-\frac{|h|}{N}\Bigr).
$$
uniformly for $t\in [0,1]$. 
\par
For $f$ in $\fct{F}_0$, the expansion above holds if and only if
$$
\alpha_0=f(1),\quad\quad \alpha_h=f(1)+2i\pi h\widehat{f}(h)
\quad\text{ for } h\not=0.
$$
We have then $f\in \spt$ if and only if $|\alpha_h|\leq 2$ for all
$h\in\Zz$.
\end{lemma}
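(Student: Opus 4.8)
The plan is to reduce everything to the already-proven Theorem~\ref{th-support} together with the Cesàro-summability argument that appeared in its proof. First I would treat the forward direction of the first equivalence: if $f$ admits a uniformly convergent expansion of the stated form with real $\alpha_h$, then the partial sums are manifestly continuous, and each summand $\alpha_h(e(ht)-1)/(2i\pi h)$ satisfies the symmetry relation~(\ref{eq-symmetry}) (one checks directly that $e(ht)-1$ and $\overline{e(h(1-t))-1}=e(-h)\overline{e(-ht)}-1 = e(ht)-1$ — here using $e(-h)=1$ for integer $h$ — combine correctly, and the linear term $\alpha_0 t$ contributes $\alpha_0 t + \alpha_0(1-t)=\alpha_0=f(1)$), so the limit $f$ lies in the closed subspace $\fct{F}_0$. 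For the converse, given $f\in\fct{F}_0$, set $g=u(f)$, so $g(0)=g(1)=0$, $g$ extends to a continuous $1$-periodic function, and the symmetry~(\ref{eq-symmetry}) forces $\widehat{g}(h)\in i\Rr$ for all $h\neq 0$ (this is exactly the computation recalled in Section~\ref{sec-structure}). Then Fejér's theorem gives $g(t)=\lim_{N}\sum_{|h|\leq N}\widehat{g}(h)e(ht)(1-|h|/N)$ uniformly; evaluating at $0$ and subtracting, and writing $\alpha_h=2i\pi h\widehat{g}(h)\in\Rr$ for $h\neq 0$ and $\alpha_0=f(1)$, yields precisely the claimed expansion — this is the same manipulation as in the proof of Theorem~\ref{th-support}.

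Next I would establish the uniqueness/identification of the coefficients. Suppose $f\in\fct{F}_0$ has such an expansion with real coefficients $\alpha_h$. Evaluating at $t=1$ kills every term $e(h\cdot 1)-1=0$, leaving $f(1)=\alpha_0$. For $h\neq 0$, I would integrate against $e(-ht)$: since the Cesàro means converge uniformly, one may exchange limit and integral, and $\int_0^1 (e(h't)-1)e(-ht)\,dt = \delta_{h',h}-\delta_{0,h}=\delta_{h',h}$ for $h\neq 0$; hence $\widehat{f}(h) = \alpha_0\widehat{(t)}(h) + \alpha_h/(2i\pi h)$. Using $\widehat{(t)}(h)=\int_0^1 t e(-ht)\,dt = -1/(2i\pi h)$ for $h\neq 0$, this rearranges to $\alpha_h = 2i\pi h\,\widehat{f}(h) + \alpha_0 = f(1) + 2i\pi h\,\widehat{f}(h)$, as asserted. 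Conversely these values of $\alpha_h$ do produce a valid expansion by the previous paragraph (note $2i\pi h\,\widehat{f}(h) + f(1) = 2i\pi h\,\widehat{g}(h)$ since $\widehat{g}(h)=\widehat{f}(h)-f(1)\widehat{(t)}(h)=\widehat{f}(h)+f(1)/(2i\pi h)$), so the expansion holds if and only if the coefficients take these values.

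Finally, for the characterization of membership in $\spt$: with $\alpha_h = f(1)+2i\pi h\widehat{f}(h) = 2i\pi h\widehat{g}(h)$, the condition $|\widehat{g}(h)|\leq 1/(\pi|h|)$ of Theorem~\ref{th-support} is literally equivalent to $|\alpha_h|\leq 2$ for $h\neq 0$, while the conditions $f(0)=0$ and $f(1)\in[-2,2]$ are automatic for $f\in\fct{F}_0$ and amount to $|\alpha_0|\leq 2$; and $\widehat{g}(h)\in i\Rr$ is automatic once $\alpha_h\in\Rr$. Hence Theorem~\ref{th-support} gives $f\in\spt \iff |\alpha_h|\leq 2$ for all $h\in\Zz$. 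I do not anticipate a serious obstacle here; the only point requiring mild care is the justification of interchanging the Cesàro limit with integration when pinning down the $\alpha_h$, which is immediate from uniform convergence on the compact interval $[0,1]$, and keeping track of the two conventions relating $\widehat{f}$, $\widehat{g}$, and the linear term so that the algebra connecting $\alpha_h$, $\widehat{g}(h)$, and Theorem~\ref{th-support} is bookkept correctly.
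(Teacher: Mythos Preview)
Your proof is correct and follows essentially the same approach as the paper's own (very terse) proof: the forward direction via Fej\'er/Ces\`aro summation of $u(f)$ and subtraction at $t=0$, the identification of coefficients via uniform convergence and computation of Fourier coefficients, and the reduction of the $\spt$-membership criterion to Theorem~\ref{th-support}. One minor phrasing slip: $f(1)\in[-2,2]$ is not ``automatic for $f\in\fct{F}_0$'' (only $f(1)\in\Rr$ is), but you correctly identify it with $|\alpha_0|\leq 2$, so the argument goes through.
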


\begin{proof}
  This is a variant of part of the proof of Theorem~\ref{th-support}.
  The ``if'' statement follows from the uniform convergence by
  computation of the Fourier coefficients. For the ``only if''
  statement, consider any $f\in\fct{F}_0$, and write $u(f)$ as the
  uniform limit of its Cesàro means; evaluating at $t=0$ and using
  $u(f)(0)=0$, we obtain
$$
f(t)=\alpha_0t+ \lim_{N\to+\infty} \sum_{1\leq |h|\leq N}\alpha_h
\frac{e(ht)-1}{2i\pi h}\Bigl(1-\frac{|h|}{N}\Bigr).
$$
with $\alpha_0=f(1)$ and $\alpha_h=f(1)+2i\pi h\widehat{f}(h)$ for
$h\not=0$. The symmetry $f(t)+\overline{f(1-t)}=f(1)$ then shows that
$\alpha_h\in\Rr$. 
\par
The remaining statements are then elementary.
\end{proof}

The support $\spt$ has some symmetry properties that we now describe:
\par
\medskip
\par
\begin{enumerate}
\item The support $\spt$ of $\kpath$ is a subset of $\fct{F}_0$. It
  is closed, convex and balanced (i.e., if $f\in \spt$ and
  $\alpha\in [-1,1]$, then we have $\alpha f\in \spt$, see \cite[EVT,
  I, p. 6, déf. 3]{bourbaki}).  In particular, if $f$ is in $\spt$,
  then $-f$ is also in $\spt$.
\item We have $\bar{f}\in \spt$ if $f\in \spt$. In particular, we
  deduce that if $f\in \spt$, then $\Reel(f)=\demi (f+\bar{f})$ and
  $i\Imag(f)=\demi(f-\bar{f})$ are also in $\spt$; on the other hand,
  $\Imag(f)\in\spt$ only if $f$ is real-valued (so the imaginary is
  zero).
\item Denote by $\spt_1$ the intersection of $\spt$ and $\fct{F}_1$,
  i.e., those $f\in \spt$ with $f(1)=0$. Then $f\in \spt$ if and only
  if $u(f)\in \spt_1$ and $f(1)\in [-2,2]$.  In particular, we have a
  kind of ``action'' of $[-2,2]$ on $\spt$: given $f\in\spt$ and
  $\alpha\in\Rr$ such that $-2\leq\alpha+f(1)\leq 2$, the function
  given by $f_{\alpha}(t)=\alpha t+f(t)$ belongs to $\spt$ (and
  $f_{\alpha+\beta}(t)=(f_{\alpha})_{\beta}$, when this makes sense).
% For $f_0\in \spt_1$, all functions $f_{\alpha}$
%   defined for $\alpha\in [-2,2]$ by $f_{\alpha}(t)=\alpha t+f(t)$
%   belong to $\spt$ (and satisfy $u(f_{\alpha})=f_0$).
\item The support $\spt$ is ``stable under Fourier contractions'':
  given any subset $S$ of $\Zz$ and any $f\in\spt$, if a function
  $g\in\fct{F}_0$ satisfies $|g(1)|\leq |f(1)|$ and
  $|\widehat{u(g)}(h)|\leq |\widehat{u(f)}(h)|$ for alll $h\not=0$ in
  $\Zz$, then $g\in \spt$. 
\end{enumerate}

These are all immediate consequences of the description of
$\spt$. However, from the point of view of Kloosterman paths, they are
by no means obvious, and reflect hidden symmetry properties of the
``shapes'' of Kloosterman sums.
\par
\medskip
\par
The next remarks describe some ``obvious'' elements of $\spt$.

\begin{enumerate}
\item By a simple integration by parts, the support $\spt$ contains
  all functions $f$ such that $f(1)\in [-2,2]$ and $u(f)$ is in
  $C^1([0,1])\cap \fct{F}_0$ with $\|f'\|_{\infty}\leq 2$. More
  generally, it suffices that $u(f)$ be of total variation with the
  total variation of $u(f)$ at most $2$.
\item Let $g\colon [0,1]\to\Rr$ be a real-valued continuous function
  such that $g(0)=0$ and $g(1-t)=g(t)$ for all $t$. Then for any
  $\alpha$ with $|\alpha|\leq 2$, the function
$$
f(t)=\alpha t+ig(t)
$$
(whose image is, for $\alpha=1$, the graph of $f$) is in $\fct{F}_0$;
it belongs to $\spt$ if and only if the non-zero Fourier coefficients of
$g=u(f)/i$ satisfy
$$
|\widehat{g}(f)|\leq \frac{1}{\pi|h|}.
$$
\item Let $\fct{G}\subset \fct{F}_0$ be the real subspace of
  functions $f\in\mathcal{F}_0$ such that we have
$$
\|f\|_{\fct{G}}=\sup_{h\in\Zz}|h\widehat{u(f)}(h)|<+\infty,
$$
given the corresponding structure of Banach space (note that the only
constant function in $\fct{G}$ is the zero function to see that this
is a norm). This space contains all $C^1$ functions that belong to
$\fct{F}_0$ (in fact, it contains all functions $f$ of bounded
variation, and $\|f\|_{\fct{G}}$ is bounded by the total variation of
$f$ by~\cite[Th. II.4.12]{Z}).  We have $\spt\subset \fct{G}$, and
$\spt$ is the closed ball of radius $\pi^{-1}$ centered at $0$ in
$\fct{G}$. In particular, for any $f\in \fct{G}$, there exists
$\alpha>0$ such that $\alpha f\in \spt$. From the arithmetic point of
view, this means that any smooth enough curve satisfying the
``obvious'' symmetry condition can be approximated by Kloosterman
paths, after re-scaling it to bring the value at $1$ and the Fourier
coefficients in the right interval.
\end{enumerate}

% \begin{proof}
%   Let $f\in \spt$. As a continuous function, the function $u(f)$ has a
%   Fourier expansion
% $$
% u(f)(t)=\sum_{h\in\Zz}\widehat{u(f)}(h)e(ht)
% $$
% in $L^2([0,1])$. For $h\not=0$, we have
% $$
% \widehat{u(f)}(h)=\widehat{f}(h)-f(1)\int_0^1te(-ht)dt=
% \widehat{f}(h)+\frac{f(1)}{2i\pi h},
% $$
% and
% $$
% \int_0^1f(t)dt=\frac{f(1)}{2}+\int_0^1u(f)(t)dt=
% \frac{f(1)}{2}+\widehat{u(h)}(0).
% $$
% \end{proof}
\par
\medskip
\par
The support $\spt$ of $\kpath$ is, in any reasonable sense, a very
``small'' subset of the subspace $\fct{F}_0$ of $C([0,1])$. For
instance, the natural analogue of the Wiener measure on $\fct{F}_0$
is the series
$$
\mathrm{N}(t)=tN_0+ \sum_{h\not=0}\frac{e(ht)-1}{2\pi ih}N_h
$$
where $(N_h)$ are independent standard (real) gaussian random
variables. It is elementary that the support of $\mathrm{N}$ is
$\fct{F}_0$, whereas we have $\proba(N\in \spt)=0$.
\par
This sparsity property of $\spt$ means that the Kloosterman paths (as
parameterized paths) are rather special, and may explain why they seem
experimentally rather distinctive (at least to certain eyes). More
importantly maybe, this feature raises a number of interesting
questions that are simply irrelevant for Brownian motion or Wiener
measure: given some ``natural'' $f\in\fct{F}_0$, does it belong to
$\spt$ or not? This also contrasts with results like Bagchi's Theorem
for the functional distribution of (say) vertical translates of the
Riemann zeta function, where the support of the limiting distribution
is ``as large as possible'', given obvious restrictions
% , although this limiting distribution contains arithmetic
% information
(see~\cite{bagchi} and~\cite[\S 3.2, 3.3]{proba}; but see also
Remark~\ref{rm-bagchi} to see that there are interesting issues there
also).
\par
Another subtlety is that the question might be phrased in different
ways. A picture of a Kloosterman path, as in~\cite{ks}, only shows the
image $f([0,1])$ of a function $f\in\fct{F}_0$, and therefore
different functions lead to the same picture (we may replace $f$ by
$f\circ \varphi$ for any homeomorphism $\varphi\colon [0,1]\to [0,1]$
such that $\varphi(0)=0$ and $\varphi(1-t)=1-\varphi(t)$, which
implies that $f\circ \varphi$ is also in $\fct{F}_0$). So even if a
function $f\in\fct{F}_0$ does \emph{not} belong to $\spt$, we can ask
whether there exists a reparameterization $\varphi$ such that
$f\circ \varphi\in\spt$. Following this question leads to connections
with some classical problems of Fourier analysis, as we discuss in
Section~\ref{sec-fourier}.
\par
Finally, we remark that the support of $\kpath$ only depends on the
support of the Sato-Tate summand, and not on their particular
distribution. This implies that $\spt$ is also the support of similar
random Fourier series where the summands are independent and have
support $[-2,2]$. In particular, from the work of Ricotta and
Royer~\cite{rr}, this applies to the support of the random Fourier
series that appears as limit in law of the Kloosterman paths modulo
$p^n$ for fixed $n\geq 1$ and $p\to +\infty$, where the corresponding
Fourier series has summands $\mathrm{C}_h$ distributed like the trace
of a random matrix in the normalizer of the diagonal torus in
$\SU_2(\Cc)$. (Note however that the values of the liminf and limsup
in Proposition~\ref{pr-arith} do, of course, depend on the laws on the
summands).

\section{Elementary examples}

We present here a number of examples, in the spirit of curiosity.
Before we begin, we remark that since numerical inequalities are
important in determining whether a function $f\in C([0,1])$ belongs to
the support of $\kpath$, we have ``tested'' the following computations
by making, in each case, sample checks with \textsc{Pari/GP} to detect
multiplicative normalization errors.

\begin{example}
  Take $f(t)=\alpha t$ for some real number $\alpha$ with
  $|\alpha|\leq 2$. Then $f$ visibly belongs to the support of
  $\kpath(t)$ since $u(f)=0$. 
\par
In particular, for $\alpha=0$, we get Corollary~\ref{cor-small} from
Proposition~\ref{pr-arith}: for any $\eps>0$, we have
$$
\liminf_{p\to+\infty} \frac{1}{(p-1)^2}\Bigl|\Bigl\{ (a,b)\in
\Fpt\times\Fpt\,\mid\, \max_{0\leq j\leq p-1} \Bigl|
\frac{1}{\sqrt{p}} \sum_{1\leq x\leq j}
e\Bigl(\frac{ax+b\bar{x}}{p}\Bigr) \Bigr|<\eps \Bigr\}\Bigr|>0.
$$
So it is possible for the partial sums of the normalized Kloosterman
sum to remain at all time in an arbitrarily small neighborhood of the
origin.
\end{example}

\begin{example}
  Take $f(t)=i\alpha t(1-t)$ for some real number $\alpha$.  Then
  $f\in \fct{F}_0$. We compute (using Lemma~\ref{lm-expansion}) the
  coefficients $\alpha_h$ in the expansion
$$
f(t)=\alpha_0 t+ \lim_{N\to+\infty} \sum_{1\leq |h|\leq N}\alpha_h
\frac{e(ht)-1}{2i\pi h}\Bigl(1-\frac{|h|}{N}\Bigr),
$$
and find that $\alpha_0=0$ and $\alpha_h=\alpha(\pi h)^{-1}\in\Rr$ for
all $h\not=0$. In particular, we have $|\alpha_h|\leq 2$ for all $h$
if and only if $|\alpha|\leq 2\pi$.
\par
The graph of $f$ in that case is the vertical segment
$[0,i\alpha/4]$. So this parameterized segment $[0,iR]$ can be
approximated by the graph of a Kloosterman path as long as
$|R|\leq \pi/2$.
% Does this restriction also apply to other parameterizations? If so,
% this is a Buffon-needle for $\pi/2$: draw Kloosterman paths for
% large primes, and see how tall a path ``close'' to a vertical
% segment can be...
More precisely, Proposition~\ref{pr-arith} gives
\begin{multline*}
\liminf_{p\to+\infty} \frac{1}{(p-1)^2}\Bigl|\Bigl\{ (a,b)\in
\Fpt\times\Fpt\,\mid\,
\\ \max_{0\leq j\leq p-1} \Bigl|
\frac{1}{\sqrt{p}} \sum_{1\leq x\leq j}
e\Bigl(\frac{ax+b\bar{x}}{p}\Bigr)-
i\alpha\frac{j}{p-1}\Bigl(1-\frac{j}{p-1}\Bigr)
\Bigr|<\eps \Bigr\}\Bigr|>0,
\end{multline*}
if $|\alpha|\leq \pi/2$.
% \par
% Proposition~\ref{pr-repar} (that we will prove in
% Section~\ref{sec-fourier}) shows that there exists for \emph{any}
% $\alpha>0$ a reparameterization of $f$, i.e., an increasing
% homeomorphism $\varphi$ of $[0,1]$ such that
% $\varphi(1-t)=1-\varphi(t)$ for all $t$, with the property that
% $f\circ \varphi$ belongs to $\spt$. But, unless one knows what
% $\varphi$ is precisely, there is no clean arithmetic consequence of
% this.
\end{example}

\begin{example}
  Let $\alpha\in [-1,1]$ and consider the map
$$
f_1(t)=2\alpha t + i \sqrt{\alpha^2-\alpha^2(2t-1)^2},
$$
which parameterizes a semicircle above the real axis with diameter
$[0,2\alpha]$. The function $f_1$ belongs to $\fct{F}_0$.
\par
Let $\varphi_1=u(f_1)$. We have 
$$
\varphi_1(t)=i \sqrt{\alpha^2-\alpha^2(2t-1)^2},
$$
and using the computation of the Fourier transform of a semicircle
distribution (see, e.g.,~\cite[3.752 (2)]{gr}), we find
$$
\widehat{\varphi}_1(h)=\alpha(-1)^h\frac{ J_1(\pi h)}{2h},
$$
for $h\not=0$, where $J_1$ is the Bessel function of the first
kind. From Bessel's integral representation
$$
J_1(x)=\frac{1}{2\pi}\int_0^{2\pi} \cos(t-x\sin(t))dt,
$$
(see, e.g.,~\cite[p. 19]{watson}) we see immediately that
$|J_1(x)|\leq 1$ for all $x$ (in fact, the maximal value of the Bessel
function is about $0.58186$), hence the bound
$|\widehat{\varphi}_1(h)|\leq (\pi|h|)^{-1}$ holds for all $h\not=0$,
and therefore $f_1$ belongs to the support of $\kpath$ for
$|\alpha|\leq 1$.
\par
Now we consider a second parameterization of the same half circle,
namely 
$$ 
f_2(t)=2\alpha(1-\cos(\pi t)+i\sin(\pi t)),
$$
(more precisely, this is below the real axis if $\alpha<0$).  Let
$\varphi_2=u(f_2)$.  We compute
$$
\widehat{\varphi}_2(h)=2\alpha\Bigl(\frac{1}{i\pi
  h}-\frac{1}{i\pi(h+\demi)}\Bigr),
$$
from which it follows that $f_2$ also belongs to the support of
$\kpath$. 
\par
We see in particular here that the Kloosterman sum can follow this
semicircle in at least two ways...
\end{example}

\begin{example}
  For $t\in \Rr$, let $\langle t\rangle$ denote the distance to the
  nearest integer. The \emph{Takagi function} $\tau$ is the
  real-valued function defined on $[0,1]$ by
$$
\tau(t)=\sum_{j\geq 0}\frac{\langle 2^jt\rangle}{2^j}.
$$
It is continuous and nowhere differentiable, and has many remarkable
properties, including intricate self-similarity (see, e.g., the survey
by Lagarias~\cite{lagarias}). Since $\tau(1-t)=\tau(t)$ for
$t\in [0,1]$ and $\tau(1)=0$, the function $f$ giving the graph of
$\tau$, namely
$$
f(t)=t+i\tau(t),
$$
belongs to $\fct{F}_0$. Hata and Yamaguti computed the Fourier
coefficients of $\tau$, from which it follows that
$$
\widehat{u(f)}(h)=\frac{1}{2^mk^2i\pi^2}
$$
for $h\not=0$, when one writes $|h|=2^mk$ with $k$ an odd integer
(see, e.g.,~\cite[Th. 6.1]{lagarias}). Hence
$$
|\widehat{u(f)}(h)|\leq \frac{1}{2^mk\pi^2}\leq \frac{1}{\pi^2 |h|},
$$
and we can conclude that $f\in \spt$. An approximation of the graph of
$\tau$ is plotted in Figure~\ref{fig-t}.
%  (the image of $f=i\tau$ is of
% course the vertical segment $[0,i\|\tau\|_{\infty}]=[0,2i/3]$ in
% $\Cc$; see~\cite[Th. 3.2 (2)]{lagarias} for the maximum of $\tau$).

\begin{figure}
\caption{The Takagi function}\label{fig-t}
\includegraphics[height=6cm]{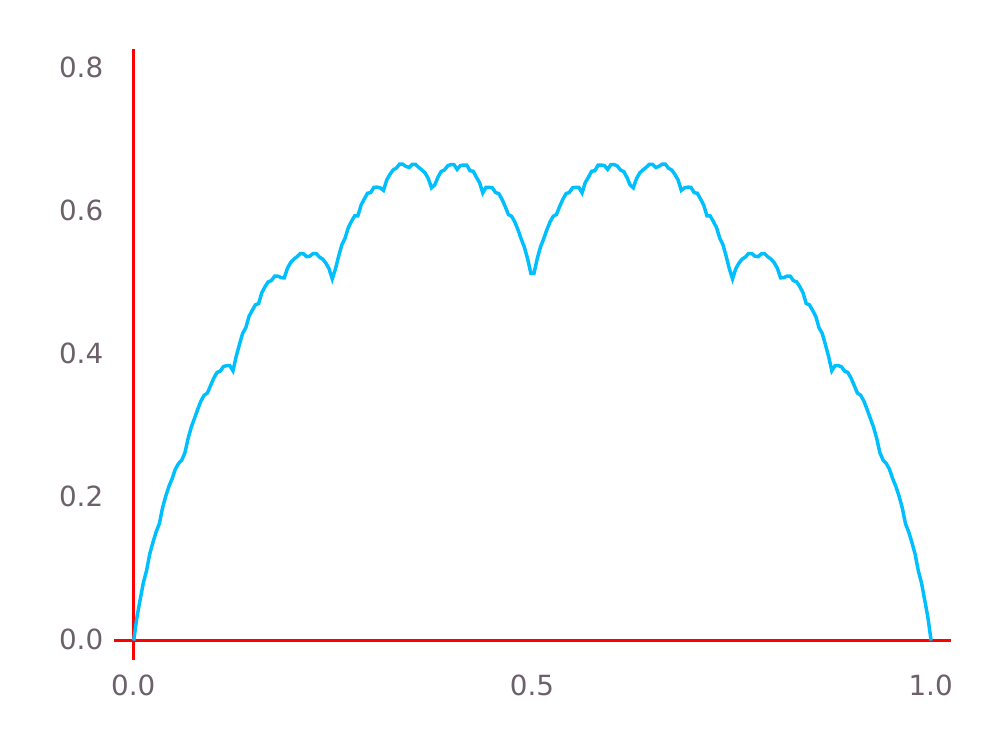}
\end{figure}

\end{example}

\begin{example}
Another famous function of real-analysis is Riemann's Fourier series
$$
\rho(t)=\sum_{n\geq 1}\frac{1}{\pi n^2}\sin(\pi n^2 t).
$$
This is a real-valued continuous $2$-periodic function such that
$\rho(0)=0$ and $\rho(t)+\rho(2-t)=0$ for all $t$. It is
non-differentiable \emph{except} at rational points $r=a/b$ with $a$
and $b$ coprime odd integers, where $\rho'(r)=-1/2$ (this is due to
Hardy for non-differentiability at irrational $t$, and to Gerver for
rational points; see Duistermaat's survey~\cite{duistermaat}, which
focuses on the links between $\rho$ and the classical theta function).
Define $f(t)=\rho(2t)$. Then $f$ is a real-valued element of
$\fct{F}_0$ with $u(f)=f$, and $\widehat{f}(h)=0$ if $|h|$ is not a
square, while
$$
\widehat{f}(\eps h^2)= \frac{\eps}{2i\pi h^2}
$$
for all $h\geq 1$ and $\eps\in\{-1,1\}$. Therefore $f\in\spt$. In
Figure~\ref{fig-rnd} is the \emph{graph} of $f$ (not the path
described by $f$, which is simply a segment of $\Rr$).

\begin{figure}
\caption{The Riemann function}\label{fig-rnd}
\includegraphics[height=6cm]{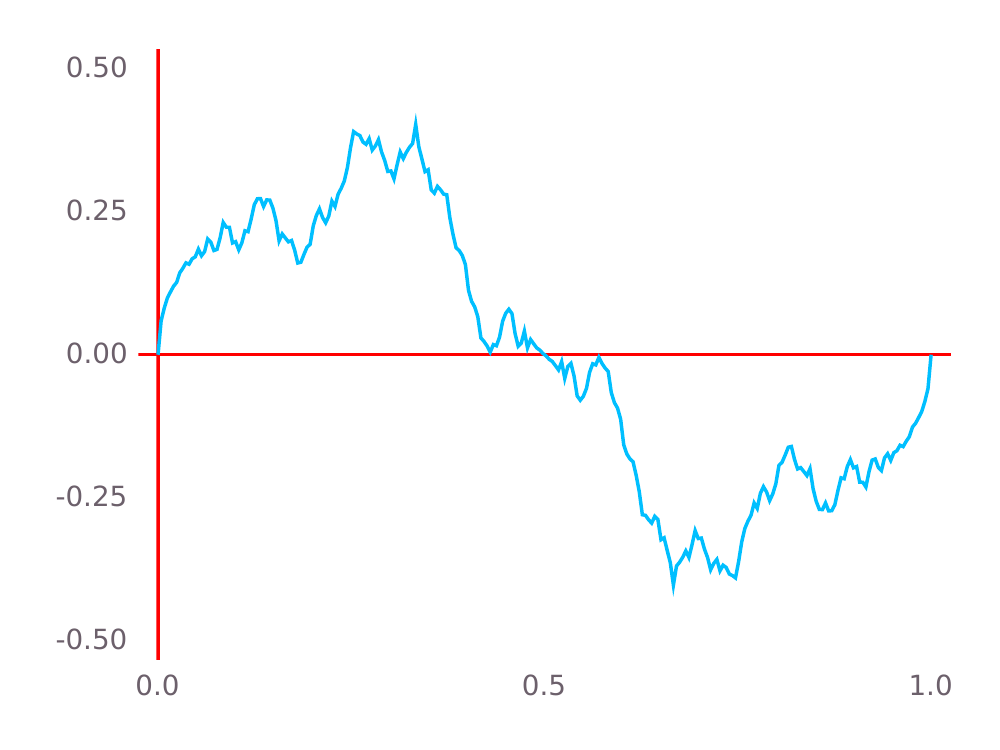}
\end{figure}

\end{example}

\begin{example}
  Yet another familiar example is the Cantor staircase function
  $\gamma$, which can be defined as
$\gamma(t)=\proba(X\leq t)$, where $X$ is the random series
$$
X=\sum_{k\geq 1} X_k
$$
with $(X_k)$ a sequence of independent random variables such that
$$
\proba(X_k=0)=\proba\Bigl(X_k=\frac{2}{3^k}\Bigr)=\frac{1}{2}
$$
for $k\geq 1$.
\par
The Cantor function satisfies $\gamma(0)=0$, $\gamma(1)=1$ and
$\gamma(t)+\gamma(1-t)=1$ for all $t$, hence $\gamma$ is a real-valued
element of $\fct{F}_0$. Computing using the probabilistic definition,
we obtain quickly the formula
$$
\widehat{u(\gamma)}(h)=\frac{(-1)^h}{2i\pi h} \prod_{k\geq
  1}\cos\Bigl(\frac{2\pi h}{3^k}\Bigr),
$$
from which we see that $\gamma\in\spt$.
\end{example}

\begin{example}
Let 
$$
f(t)=\sum_{h\geq 1}\mu(h)\frac{e(ht)-1}{2i\pi},
$$
where $\mu(h)$ denotes the Möbius function.  It is known (essentially
from work of Davenport, see~\cite{bateman-chowla} and~\cite[Th
13.6]{ant}, and from the Prime Number Theorem that implies that
$\sum \mu(h)h^{-1}=0$) that the series converges uniformly. Clearly
this function, which we call the Davenport function, belongs to
$\spt$. Its path is pictured in the left-hand graph of
Figure~\ref{fig-dav}.
\par
We may replace the Möbius function with the Liouville function, and we
also display the resulting path on the right-hand side of
Figure~\ref{fig-dav}.

\begin{figure}
\caption{The Davenport function and its variant}\label{fig-dav}
\includegraphics[height=6cm]{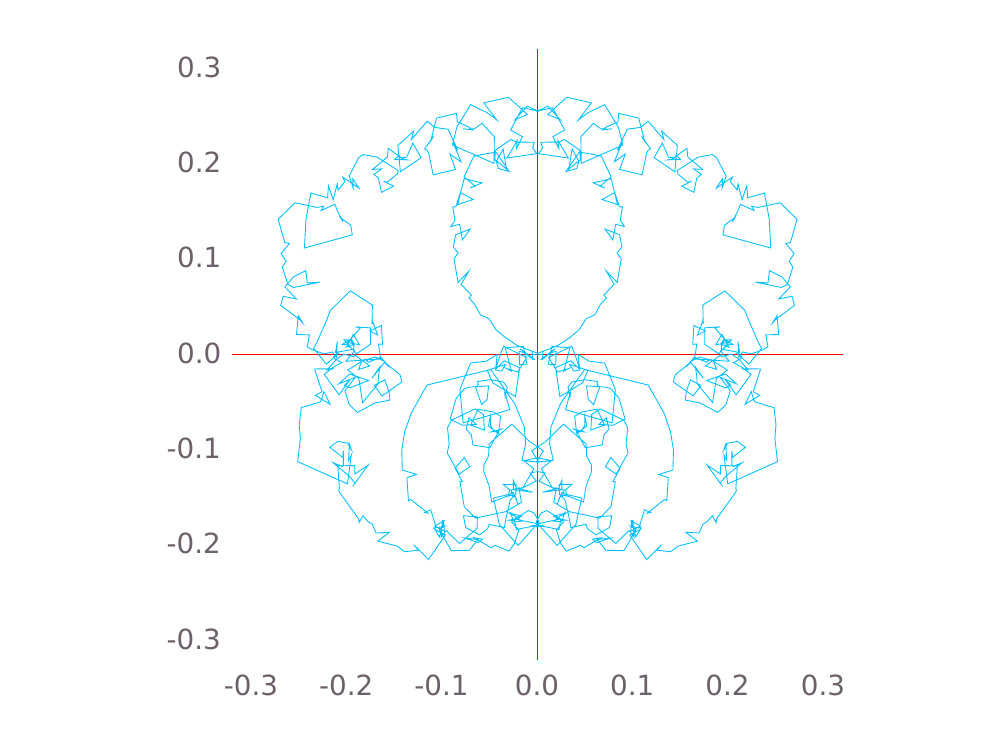}
\includegraphics[height=6cm]{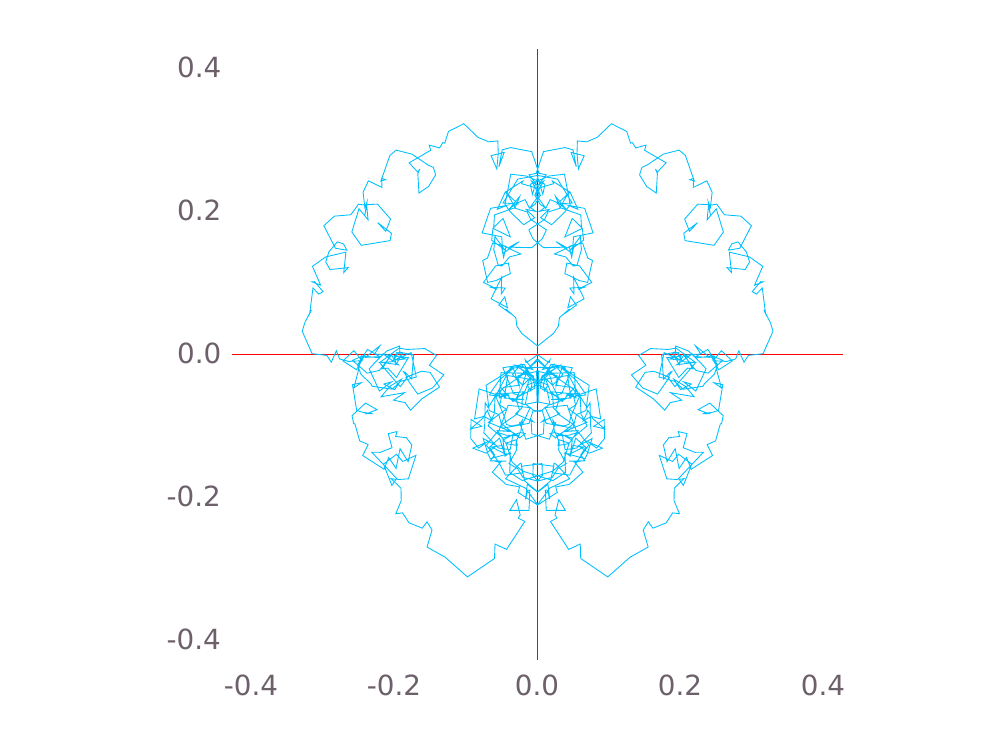}
\end{figure}

\end{example}

\section{Polygonal paths}\label{sec-polygons}

Polygonal paths provide a very natural class of examples of functions,
and we will consider a number of them. We begin with some elementary
preparation.
\par
Let $z_0$ and $z_1$ be complex numbers, and $t_0<t_1$ real numbers. We
define $\Delta=t_1-t_0$ and $f\in C([0,1])$ by
$$
f(t)=
\begin{cases}
\frac{1}{\Delta}(z_1(t-t_0)+z_0(t_1-t))&\text{ if } t_0\leq t\leq t_1,\\
0&\text{ otherwise,}
\end{cases}
$$
which parameterizes the segment from $z_0$ to $z_1$ during the
interval $[t_0,t_1]$.
\par
Let $h\not=0$ be an integer. By direct computation, we find
\begin{align*}
  \widehat{f}(h)
  &=-\frac{1}{2i\pi h} (z_1e(-ht_1)-z_0e(-ht_0))+
    \frac{1}{2i\pi
    h}(z_1-z_0)e(-ht_0)\frac{1}{\Delta}\Bigl(\int_0^{\Delta}e(-hu)du\Bigr)\\
  &=
    -\frac{1}{2i\pi h} (z_1e(-ht_1)-z_0e(-ht_0))+
    \frac{1}{2i\pi
    h}(z_1-z_0)
    \frac{\sin(\pi h\Delta)}{\pi h
    \Delta}e\Bigl(-h\Bigl(t_0+\frac{\Delta}{2}\Bigr)\Bigr).
\end{align*}
\par
Consider now an integer $n\geq 1$, a family $(z_0,\ldots,z_n)$ of
complex numbers and a family $(t_0,\ldots, t_n)$ of real numbers with
$$
0=t_0<t_1<\ldots<t_{n-1}<t_n=1.
$$
Let $f_j$ be the function as above relative to the points
$(z_{j},z_{j+1})$ and the interval $[t_j,t_{j+1}]$, and let 
function
$$
f=\sum_{j=0}^{n-1} f_j
$$
(in other words, $f$ parameterizes the polygonal path joining $z_0$ to
$z_1$ to \ldots\ to $z_n$, over intervals $[t_0,t_1]$, \ldots,
$[t_{n-1},t_n]$). Let $\Delta_{j}=t_{j+1}-t_{j}$.
\par
For $h\not=0$, we obtain by summing the previous expression, and using
a telescoping sum
\begin{equation}\label{eq-fourier-polygonal}
\widehat{f}(h)=-\frac{1}{2i\pi h} (z_n-z_0)+ \frac{1}{2i\pi h}
\sum_{j=0}^{n-1}(z_{j+1}-z_j)e\Bigl(-h\Bigl(t_j+\frac{\Delta_j}{2}\Bigr)
\Bigr)\frac{\sin(\pi h\Delta_j)}{\pi h \Delta_j}.
% ) \frac{1}{\Delta_{j}}
% \int_0^{\Delta_{j}}e(-hu)du.
\end{equation}
\par
Now assume further that $\Delta_j$ is constant for $0\leq j\leq n-1$,
equal to $1/n$. We then have $t_j=j/n$, and we obtain
\begin{equation}\label{eq-fourier-pol-2}
\widehat{f}(h)=-\frac{1}{2i\pi h} (z_n-z_0)+ \frac{1}{2i\pi h}
\frac{\sin(\pi h/n)}{\pi h/n}
\sum_{j=0}^{n-1}(z_{j+1}-z_j)e\Bigl(-\frac{h(j+\demi)}{n}\Bigr).
\end{equation}
% where 
% $$
% \beta_n=n \int_0^{1/n}e(-hu)du.
% $$
% We note that $|\beta_n|\leq 1$. 
It is elementary that $f$ belongs to $\fct{F}_0$ if and only if
$z_0=0$ and if the sums
\begin{equation}\label{eq-sums-polygonal}
  \widetilde{f}(h)=\sum_{j=0}^{n-1}(z_{j+1}-z_j)e\Bigl(-\frac{h(j+\demi)}{n}\Bigr)
\end{equation}
are real-valued.  If this is the case, then the polygonal function $f$
belongs to $\spt$ if and only if $|z_n|\leq 2$ and
\begin{equation}\label{eq-cond-polygonal}
  \Bigl|\frac{\sin(\pi h/n)}{\pi h/n}
  \widetilde{f}(h)\Bigr|=  \Bigl|\frac{\sin(\pi h/n)}{\pi h/n}
  \sum_{j=0}^{n-1}(z_{j+1}-z_j)e\Bigl(-\frac{hj}{n}\Bigr)
  \Bigr|\leq 2
\end{equation}
for all $h\not=0$ (disregarding the constant phase $e(-h/(2n))$,
although it is important to ensure that the exponential sums are
real-valued).

\begin{example}\label{ex-klpath1}
  The first polygonal paths that we consider are -- naturally enough
  -- the Kloosterman paths themselves.

  \emph{Fix} an odd prime $p$ and integers $a$ and $b$ coprime to
  $p$. Let $f\in C([0,1])$ be the function given by the Kloosterman
  path $K_p(a,b)$. It is an element of $\fct{F}_0$, and we can
  interpret it as a polygonal function with the following data:
  $n=p-1$, $t_j=j/(p-1)$ for $0\leq j\leq p$, and
$$
z_j=\frac{1}{\sqrt{p}}\sum_{1\leq x\leq
  j}e\Bigl(\frac{ax+b\bar{x}}{p}\Bigr),\quad\quad 0\leq j\leq p-1.
$$
Since $z_{p-1}$ is the normalized Kloosterman sum $\hypk_2(a,b;p)$, we
have $z_{p-1}\in [-2,2]$ by the Weil bound. Since
$$
z_{j+1}-z_j=\frac{1}{\sqrt{p}}
e\Bigl(\frac{a(j+1)+b\overline{(j+1)}}{p}\Bigr),
$$
the condition~(\ref{eq-cond-polygonal}) becomes 
$$
\Bigl|\frac{\sin(\pi h/(p-1))}{\pi h/(p-1)} \sum_{x=1}^{p-1}
e\Bigl(\frac{ax+b\bar{x}}{p}\Bigr) e\Bigl(-\frac{hx}{p-1}\Bigr)
\Bigr|\leq 2\sqrt{p}
$$
for all non-zero integers $h$ (after a change of variable), or indeed
for $1\leq h\leq p(p-1)$, by periodicity of $\widetilde{f}(h)$, since
the function $x\mapsto |\sin(\pi x/p)/(x/p)|$ is decreasing along
arithmetic progressions modulo $p(p-1)$.
\par
The inner sum is not quite the Kloosterman sum $\hypk_2(a-h,b;p)$, or
any other complete exponential sum. In particular, whether the desired
condition is satisfied is not obvious at all. It suffices that
\begin{equation}\label{eq-easy-cond}
\Bigl|\frac{1}{\sqrt{p}} \sum_{x=1}^{p-1}
e\Bigl(\frac{ax+b\bar{x}}{p}\Bigr) e\Bigl(-\frac{hx}{p-1}\Bigr)
\Bigr|\leq 2
\end{equation}
for $1\leq h\leq p(p-1)$ (by periodicity), but this is not a necessary
condition.
\par
We provide some numerical illustrations. In the following table, we
indicate for various primes $p$ how many $a\in \Fpt$ are such that the
Kloosterman path $K_p(a,1)$ modulo $p$ is in $\spt$, how many satisfy
the sufficient condition~(\ref{eq-easy-cond}) and how many are not in
$\spt$.

\begin{center}
\bottomcaption{Kloosterman paths $K_p(a,1)$}
\tablehead{
\hline
&
In $\spt$ with~(\ref{eq-easy-cond})
&
In $\spt$ without~(\ref{eq-easy-cond})
&
Not in $\spt$
\\
\hline
}
{\small
\renewcommand{\arraystretch}{1.3}
\begin{xtabular}{c|c|c|c}
  $5$ & $4$ & $0$ & $0$\\
  \hline
  $7$ & $6$ & $0$ & $0$\\
  \hline
  $13$ & $9$ & $3$ & $0$\\
  \hline
  $19$ & $1$ & $14$ & $3$\\
  \hline
  $23$ & $9$ & $13$ & $0$\\
  \hline
  $29$ & $28$ & $0$ & $0$\\
  \hline
  $229$ & $0$ & $133$ & $95$\\
  \hline
  $233$ & $0$ & $126$ & $106$\\
  \hline
  $541$ & $0$ & $0$ & $540$\\
  \hline
  $557$ & $0$& $27$ & $529$\\
  \hline
\end{xtabular}
}\end{center}

Maybe there are only finitely many Kloosterman paths in $\spt$?  The
``first'' example of a Kloosterman path not in $\spt$ is
$K_{19}(8,1)$. We picture it in Figure~\ref{fig-1} (and observe that
it looks a lot like a shadok). %%~\cite{shadoks}).

\begin{figure}
\caption{The Kloosterman path $K_{19}(8,1)$}\label{fig-1}
\includegraphics[height=6cm]{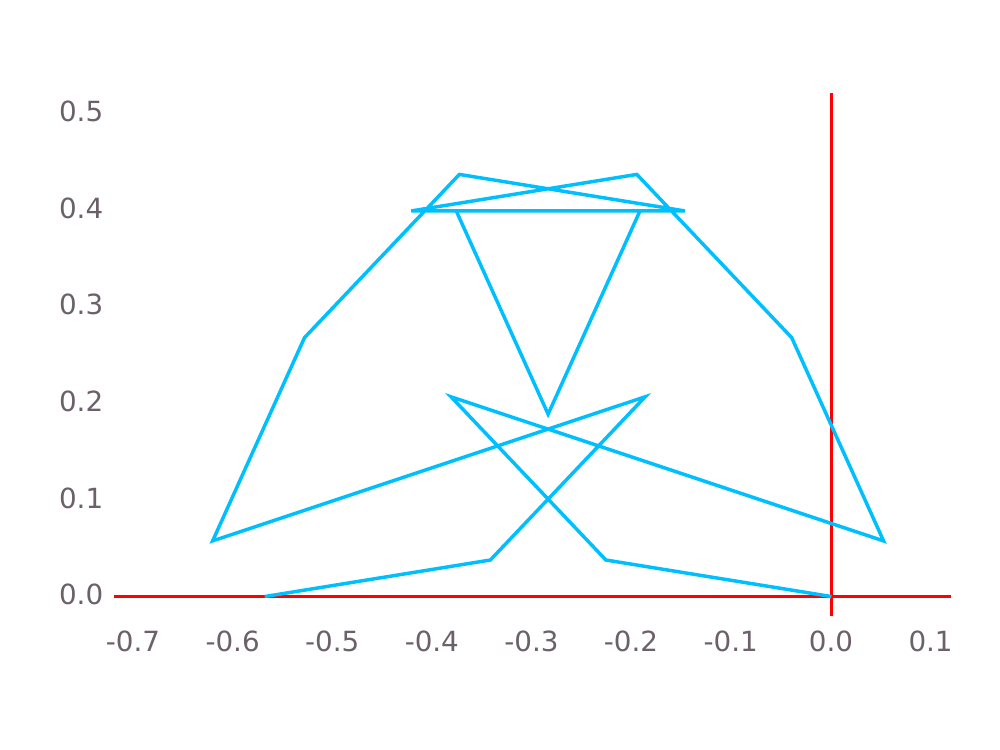}
\end{figure}

\begin{remark}\label{rm-bagchi}
  The analogue question for other probabilistic number theory results
  can also be of interest, and quite deep: if we consider Bagchi's
  results (\cite[Ch. 5]{bagchi}) concerning vertical translates of the
  Riemann zeta function restricted to a fixed small circle in the
  strip $1/2<\mathrm{Re}(s)<1$, then we see that the Riemann
  Hypothesis for the Riemann zeta function is equivalent to the
  statement that, for any $t\in\Rr$, and any such disc, the
  restriction of $s\mapsto \zeta(s+it)$ belongs to the support of the
  limiting distribution.
\end{remark}

% \begin{remark}
%   The computation above suggests an alternate approach to the main
%   results of~\cite{ks}. Namely, we view again $(a,b)\mapsto K_p(a,b)$
%   as a $C([0,1])$-valued random variable on $\Fpt\times\Fpt$,
%   by~(\ref{eq-fourier-pol-2}). For $h\not=0$, the $h$-th Fourier
%   coefficient of the $1$-periodic function
%   $t\mapsto K_p(a,b)-t\hypk_2(a,b;p)$ is the random variable mapping
%   $(a,b)$ to
% $$
% \frac{1}{2i\pi h}\frac{\sin(\pi h(p-1))}{\pi h/(p-1)}
% e\Bigl(-\frac{h}{2(p-1)}\Bigr)
% \sum_{x=1}^{p-1}e\Bigl(\frac{ax+b\bar{x}}{p}\Bigr)
% e\Bigl(-\frac{hx}{p-1}\Bigr).
% $$
% For fixed $h\not=0$, it follows elementarily from Katz's
% equidistribution theorem that this random variable converges to
% $(2i\pi h)^{-1}\st_h$, where $\st_h$ is a Sato-Tate random variable,
% and indeed (as in the crucial step in Lemma 2.5 of~\cite{ks}) that the
% family of all Fourier coefficients converges in law to the family
% $(\st_h)$ of independent Sato-Tate random variables. Combined with the
% tightness of the random variables $K_p$ (\cite[\S 3]{ks}), this
% implies after some re-arrangement the convergence in law of $K_p$ to
% $\kpath$. 
% \par
% Note that an argument of this type still uses the same crucial
% ingredients (Sato-Tate limiting distribution and independence of
% shifted Kloosterman sums, and tightness) as the proof in~\cite{ks}. It
% is also a bit less quantitative.
% \end{remark}

\end{example}

\begin{example}\label{ex-klpath2}
  We now consider a variant of Kloosterman paths (the Swiss railway
  clock version) where the partial sums are joined with intervals of
  length $1/p$, but a pause (of duration $1/p$) is inserted at the
  ``middle point'' (the second hand of a Swiss railway clock likewise
  stops about a second and a half at the beginning of each minute).
  %% https://www.eda.admin.ch/aboutswitzerland/en/home/dossiers/einleitung---schweizer-uhren/spezielle-uhren--schweizer-bahnhofsuhr.html
\par
This means that we consider again a fixed odd prime $p$ and
$(a,b)\in\Fpt\times\Fpt$, and the polygonal path with $n=p$, $t_j=j/p$
and
$$
z_j=\frac{1}{\sqrt{p}}\sum_{1\leq x\leq
  j}e\Bigl(\frac{ax+b\bar{x}}{p}\Bigr)\quad\text{ for } 0\leq j\leq
(p-1)/2,
$$
%% then $z_{(p+1)/2}=z_{(p-1)/2}$ (the pause) and finally
and
$$
z_j=\frac{1}{\sqrt{p}}\sum_{1\leq x\leq
  j-1}e\Bigl(\frac{ax+b\bar{x}}{p}\Bigr)\quad\text{ for } (p+1)/2\leq
j\leq p,
$$
which means in particular that $z_{(p-1)/2}=z_{(p+1)/2}$, representing
the pause. Because this pause comes in the middle of the path, we have
$f\in\fct{F}_0$.
\par
We get
$$
z_{j+1}-z_j=\frac{1}{\sqrt{p}}
e\Bigl(\frac{a(j+1)+b\overline{(j+1)}}{p}\Bigr),
$$
if $0\leq j\leq (p-3)/2$, $z_{(p+1)/2}-z_{(p-1)/2}=0$ and
$$
z_{j+1}-z_j=\frac{1}{\sqrt{p}}
e\Bigl(\frac{aj+b\bar{\jmath}}{p}\Bigr),
$$
if $(p+1)/2\leq j\leq p-1$. Hence the sums $\widetilde{f}(h)$ given
by~(\ref{eq-sums-polygonal}) become
\begin{multline*}
  \frac{1}{\sqrt{p}} \sum_{x=0}^{(p-3)/2}
  e\Bigl(\frac{a(x+1)+b\overline{(x+1)}}{p}\Bigr)
  e\Bigl(-\frac{h(x+\demi)}{p}\Bigr) +\frac{1}{\sqrt{p}}
  \sum_{x=(p+1)/2}^{p-1} e\Bigl(\frac{ax+b\bar{x}}{p}\Bigr)
  e\Bigl(-\frac{h(x+\demi)}{p}\Bigr)\\
  = \frac{1}{\sqrt{p}}e\Bigl(-\frac{h}{2p}\Bigr) \sum_{x=1}^{(p-1)/2}
  e\Bigl(\frac{(a-h)x+b\bar{x}}{p}\Bigr) + \frac{1}{\sqrt{p}}
  e\Bigl(\frac{h}{2p}\Bigr)\sum_{x=(p+1)/2}^{p-1}
  e\Bigl(\frac{(a-h)x+b\bar{x}}{p}\Bigr),
\end{multline*}
for all non-zero integers $h$ (it is more convenient here to keep the
phase).
\par
These are again close to the Kloosterman sums $\hypk_2(a-h,b;p)$, but
slightly different. Precisely, let
$$
\hypkm(a,b;p)=K_p(a,b)(1/2)=\frac{1}{\sqrt{p}}\sum_{x=1}^{(p-1)/2}
e\Bigl(\frac{ax+b\bar{x}}{p}\Bigr)
$$
denote the ``mezzo del cammin'' of the Kloosterman path, so that
$$
2\Reel(\hypkm(a,b;p))=\hypk_2(a,b;p).
$$
The sum $\widetilde{f}(h)$ above is then equal to
\begin{multline*}
  e\Bigl(-\frac{h}{2p}\Bigr)\hypkm(a-h,b;p) +
  e\Bigl(\frac{h}{2p}\Bigr)\overline{\hypkm(a-h,b;p)}=\\
  \cos(\pi h/p)\hypk_2(a-h,b;p)+
  2\sin(\pi h/p)\Imag(\hypkm(a-h,b;p)).
\end{multline*}
\par
To have $f\in\spt$ in this case, we must have
$$
\Bigl|\frac{\sin(\pi h/p)}{\pi h/p} \widetilde{f}(h) \Bigr|\leq 2
$$
for all $h\not=0$, or (by periodicity of $\widetilde{f}(h)$ and decay
of $x\mapsto |\sin(\pi x/p)/(x/p)|$ along arithmetic progressions
modulo $p$) when $1\leq h\leq p-1$.  Whether this holds or not depends
on the values of the imaginary part of $\hypkm(a-h,b;p)$ as $h$
varies. As in the previous example, it suffices that
\begin{equation}\label{eq-middle-simple}
|\widetilde{f}(h)|\leq 2
\end{equation}
for $1\leq h\leq p-1$.
\par
It follows from~\cite[Prop. 4.1]{ks} that when $p$ is large the random
variable $a\mapsto \Imag(\hypkm(a,b;p))$ on $\Fpt$ takes (rarely but
with positive probability) arbitrary large values. This indicates that
the property above becomes more difficult to achieve for large
$p$. Again, we present numerical illustrations.
% , which show a markedly
% different behavior from that of the Kloosterman paths themselves.

\begin{center}
\bottomcaption{Swiss Railway Clock Kloosterman paths $K_p(a,1)$}
\tablehead{
\hline
&
In $\spt$ with~(\ref{eq-middle-simple})
&
In $\spt$ without~(\ref{eq-middle-simple})
&
Not in $\spt$
\\
\hline
}
{\small
\renewcommand{\arraystretch}{1.3}
\begin{xtabular}{c|c|c|c|c}
  $5$ & $4$ & $0$ & $0$\\
  \hline
  % $7$ & $6$ & $0$\\
  % \hline
  $17$ & $14$ & $1$ & $1$\\
  \hline
  % $19$ & $17$ & $1$\\
  % \hline
  $23$ & $19$ & $2$ & $1$\\
  \hline
  $29$ & $26$ & $2$ & $0$\\
  \hline
  $229$ & $204$ & $17$ & $7$\\
  \hline
  $541$ & $484$ & $36$ & $20$\\
  \hline
  $1223$ & $1088$ & $94$ & $40$\\
  \hline
  $1987$ & $1763$ & $172$ & $51$\\
  \hline
  $2741$ & $2416$ & $239$ & $85$\\
  \hline
  $3571$ & $3176$ & $281$ & $113$\\
  \hline
\end{xtabular}
}\end{center}

The first case of a Swiss Clock Kloosterman path that is not in $\spt$
is the one corresponding to $K_{17}(8,1)$, pictured in
Figure~\ref{fig-2}.

\begin{figure}
\caption{The Kloosterman path $K_{17}(8,1)$}\label{fig-2}
\includegraphics[height=6cm]{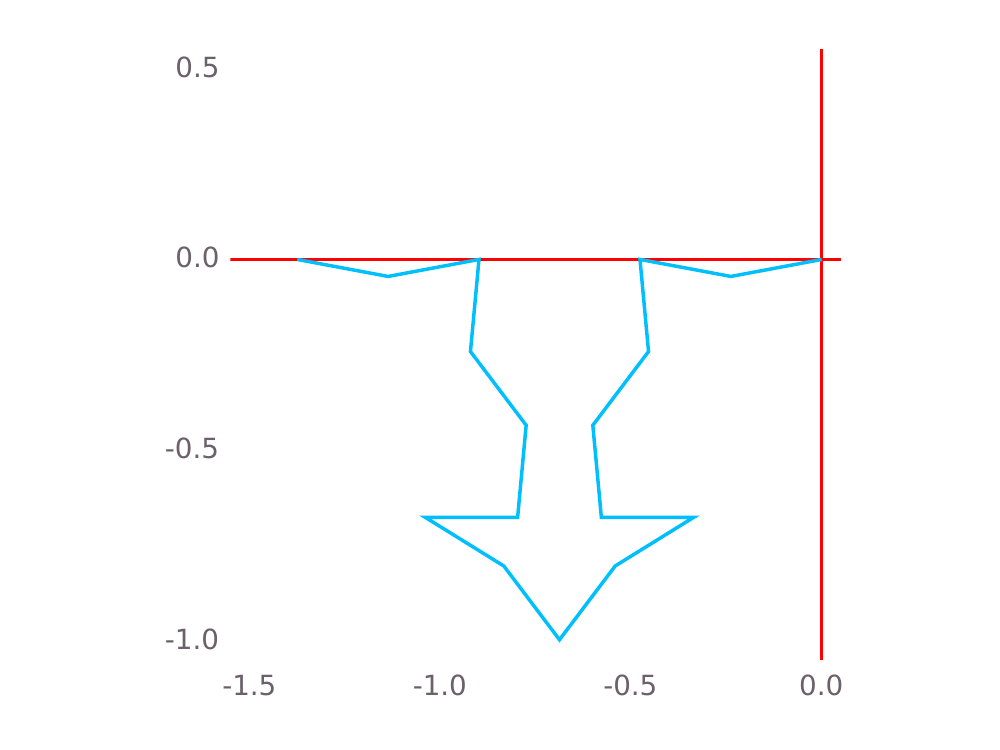}
\end{figure}

Despite these numbers, we can prove:

\begin{proposition}
  For all $p$ large enough, and all $(a,b)\in\Fpt\times\Fpt$, we have
  $f\notin\spt$.
\end{proposition}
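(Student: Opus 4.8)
The plan is to combine the description of $\spt$ via the exponential sums $\widetilde f(h)$ from the preceding discussion with a moment estimate showing that the imaginary parts $\Imag(\hypkm(c,b;p))$ of the midpoint Kloosterman sums cannot all stay bounded as $c$ runs over a long interval.

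First I would use the reduction from Example~\ref{ex-klpath2}: if $f\in\spt$ then $\bigl|\frac{\sin(\pi h/p)}{\pi h/p}\widetilde f(h)\bigr|\le 2$ for every integer $h$, where $\widetilde f(h)=\cos(\pi h/p)\hypk_2(a-h,b;p)+2\sin(\pi h/p)\Imag(\hypkm(a-h,b;p))$. Restricting to $p/4\le h\le 3p/4$, where $\sin(\pi h/p)\ge 1/\sqrt2$, $|\cos(\pi h/p)|\le 1/\sqrt2$ and $\frac{\sin(\pi h/p)}{\pi h/p}\ge\frac{2\sqrt2}{3\pi}$, and using the Weil bound $|\hypk_2(a-h,b;p)|\le 2$, one finds that $f\in\spt$ forces
\[
|\Imag(\hypkm(c,b;p))|\le C_0:=\tfrac{3\pi}{2}+1\quad\text{for all }c\in I_a:=\{\,a-h\bmod p:\ p/4\le h\le 3p/4\,\},
\]
a cyclic interval with at least $p/3$ elements for $p$ large. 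It therefore suffices to prove: for $p$ large, every $b\in\Fpt$ and every cyclic interval $I\subset\Zz/p\Zz$ with $|I|\ge p/3$, one has $\max_{c\in I}|\Imag(\hypkm(c,b;p))|>C_0$.

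For this I would apply the method of moments. Writing $\Imag(\hypkm(c,b;p))=\frac1{2i}\sum_{u\in\Fp}\hat w(u)\hypk_2(c+u,b;p)$, where $w(x)=\pm1$ is the sign of the representative of $x$ in $(-p/2,p/2)$ and $\hat w$ is its additive Fourier transform (so $|\hat w(u)|\ll 1/\|u\|$ and $\sum_u|\hat w(u)|\ll\log p$), I raise to the $2k$-th power, sum over $c\in I$ via $\charfun_I(c)=\sum_t\widehat{\charfun_I}(t)e(tc/p)$ (with $\sum_{t\ne0}|\widehat{\charfun_I}(t)|\ll\log p$), and reduce to the twisted shifted moments $\sum_{s\in\Fp}e(ts/p)\prod_{i=1}^{2k}\hypk_2(s+u_i,b;p)$. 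By Katz's results — the Kloosterman sheaf $\mathcal{K}\ell_2$ has geometric monodromy group $\SL_2$ and its distinct additive translates are geometrically independent — together with Deligne's bound, such a sum is $O_k(\sqrt p)$ unless $t=0$ and every value of the multiset $\{u_1,\dots,u_{2k}\}$ has even multiplicity, in which case it equals $\bigl(\prod_\ell C_{m_\ell}\bigr)p+O_k(\sqrt p)$, where the $C_m=\dim\bigl((\mathrm{std})^{\otimes 2m}\bigr)^{\SL_2}$ are Catalan numbers; the implied constants are uniform in $b$, in the shifts $u_i$ and in $t$. Summing up, the terms with $t\ne0$ contribute $O_k(\sqrt p(\log p)^{2k+1})$ and the term $t=0$ gives $|I|\,M_{2k}+O_k(\sqrt p(\log p)^{2k})$, whence
\[
\frac1{|I|}\sum_{c\in I}\Imag(\hypkm(c,b;p))^{2k}=M_{2k}+o(1)\qquad(p\to\infty),
\]
uniformly in $b$ and $I$, with $M_{2k}\ge 0$ independent of $b$ and $I$. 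As the $2m$-th moment of the Sato–Tate measure is the Catalan number $C_m$, the combinatorial value of $M_{2k}$ is exactly the $2k$-th moment of the random variable $Y:=\Imag(\kpath(1/2))=\frac1\pi\sum_{h>0\text{ odd}}\frac{\st_h-\st_{-h}}{h}$ (this is also forced by the functional limit theorem \cite[Th.~1.5]{ks} at $t=1/2$ with \cite[Prop.~4.1]{ks}).

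Finally, $Y$ is \emph{unbounded}: its summands $\st_h-\st_{-h}$ are supported in $[-4,4]$ with $4$ in the support, and $\sum_{h\text{ odd}}1/h=\infty$, so forcing the first $N$ of them close to $4$ and using independence from the tail gives $\proba(Y>M)>0$ for every $M$. Hence $M_{2k}^{1/2k}=\|Y\|_{L^{2k}}\uparrow\|Y\|_{L^\infty}=+\infty$. Fixing $k$ with $M_{2k}^{1/2k}>C_0$, the displayed estimate yields, for $p$ large, every $b\in\Fpt$ and every cyclic $I$ with $|I|\ge p/3$,
\[
\max_{c\in I}|\Imag(\hypkm(c,b;p))|\ge\Bigl(\tfrac1{|I|}\sum_{c\in I}\Imag(\hypkm(c,b;p))^{2k}\Bigr)^{1/2k}=(M_{2k}+o(1))^{1/2k}>C_0,
\]
and taking $I=I_a$ contradicts the second paragraph; so $f\notin\spt$ for all $(a,b)$ once $p$ is large. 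The crux is the uniform estimate for the twisted shifted moments of Kloosterman sums in the third paragraph, which rests on the (deep but standard) monodromy and independence statements for Kloosterman sheaves; granted those, passing to a long interval and identifying $M_{2k}$ are routine.
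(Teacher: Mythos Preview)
Your argument is correct and rests on the same deep inputs as the paper's (the geometric independence of additive shifts of the Kloosterman sheaf and Deligne's bound, together with the unboundedness of $\Imag(\kpath(1/2))$), but the packaging is genuinely different. The paper keeps the full quantity $\frac{\sin(\pi h/p)}{\pi h/p}\widetilde f(h)$, proves that as a function of the uniform random variable $h$ it converges in law (uniformly in $(a,b)$) to $2\frac{\sin(\pi X_0)}{\pi X_0}\bigl(\cos(\pi X_0)\Reel(\kpath(1/2))+\sin(\pi X_0)\Imag(\kpath(1/2))\bigr)$ with $X_0$ uniform and independent of $\kpath(1/2)$, and then observes $\proba(|Y|>2)>0$. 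You instead make an elementary reduction on the window $h\in[p/4,3p/4]$ to isolate the single quantity $\Imag(\hypkm(c,b;p))$ on a long cyclic interval, and then run an explicit $2k$-th moment computation there. Your route is more hands-on and in principle effective (one could extract an explicit $p_0$), while the paper's is shorter and more conceptual.

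One point to tighten: your ``combinatorial value'' $M_{2k}$ still depends on $p$ through the finite Fourier weights $\hat w(u)$, so it is not \emph{exactly} $\expect[\Imag(\kpath(1/2))^{2k}]$; only $M_{2k}^{(p)}\to\expect[Y^{2k}]$ as $p\to\infty$. This is harmless for your purposes: since $\Imag(\hypkm(\cdot,b;p))$ converges in law to $Y$ uniformly in $b$, a Fatou-type truncation argument gives $\liminf_p\frac1p\sum_{s}\Imag(\hypkm(s,b;p))^{2k}\ge\expect[Y^{2k}]$ uniformly in $b$, and your $t\neq0$ estimate already shows the interval average equals the global average up to $o(1)$. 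With this small correction in place your proof goes through.
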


\begin{proof}[Sketch of proof]
  By the Weyl criterion, for any fixed $k\geq 1$ and any tuple
  $(b_1,\ldots,b_k)$ of non-zero integers, the random variables
$$
a\mapsto
\Bigl(\frac{h}{p},\hypk(a,b_1;p),\ldots,\hypk(a,b_k;p)\Bigr)\in
\Rr/\Zz\times \Rr^k
$$
on $\Fp$ (with uniform probability measure) converge in law as
$p\to+\infty$ to independent random variables $(X_0,\ldots,X_{k})$
where $X_0$ is uniformly distributed in $\Rr/\Zz$ and
$(X_1,\ldots,X_k)$ are independent Sato-Tate random variables.  Using
the discrete Fourier expansion of $\hypkm(a-h,b;p)$, it follows that,
for any fixed $(a,b)$, the random variables
$$
h\mapsto \Bigl(\frac{h}{p},\hypkm(a-h,b;p)\Bigr)\in \Rr/\Zz\times \Cc
$$
on $\{0,\ldots,p-1\}$ (with uniform probability measure) converge in
law to $(X_0,\kpath(1/2))$ where $X_0$ is independent of
$\kpath(1/2)$. Moreover, the convergence is uniform in terms of
$(a,b)$.%%\footnote{TODO: explain this better.}
\par
Therefore, the random variable
$$
h\mapsto \frac{\sin(\pi h/p)}{\pi h/p} \widetilde{f}(h)
$$
converges in law to
$$
Y=2\frac{\sin(\pi X_0)}{\pi X_0}\Bigl(\cos(\pi X_0)\Reel(\kpath(1/2))+
\sin(\pi X_0)\Imag(\kpath(1/2))\Bigr).
$$
Since $X_0$ and $\kpath(1/2)$ are independent and the real part of
$\kpath(1/2)$ is between $-1$ and $1$, we have (say)
\begin{align*}
\proba(|Y|>2)&\geq \proba(|\Imag(K(1/2))|\geq 10\text{ and }
|X_0-1/4|\leq 1/10)\\
&=
\proba(|\Imag(K(1/2))|\geq 10)\proba(|X_0-1/4|\leq 1/10)>0
\end{align*}
since we showed in~\cite[Prop. 4.1]{ks} that $\Imag(K(1/2))$ can take
arbitrarily large values with positive probability. Hence, for all $p$
large enough, there exists $h$ such that
$$
\Bigl|\frac{\sin(\pi h/p)}{\pi h/p} \widetilde{f}(h)\Bigr|>2.
$$
% \begin{multline*}
% h\mapsto \frac{\sin(\pi h/p)}{\pi h/p} \widetilde{f}(h)\\=
% \frac{\sin(\pi h/p)}{\pi h/p} \Bigl(\cos(\pi h/p)\hypk_2(a-h,b;p)+
% 2\sin(\pi h/p)\Imag(\hypkm(a-h,b;p))\Bigr)
% \end{multline*}
% as a random variable on $\{0,\ldots,p-1\}$ with uniform probability
% measure.
\end{proof}

\end{example}

\begin{example}\label{ex-klpath3}
  Third-time lucky: the next variant of Kloosterman paths will always
  be realized in $\spt$. We now insert two pauses of duration $1/(2p)$
  at the beginning and end of the path. Thus $n=p+1$, $t_0=0$ and
  $t_{p+1}=1$, while $t_i=(i-\demi)/p$ for $1\leq i\leq p$; moreover
  $z_i$ is given by
$$
z_0=0,\quad\quad z_{p+1}=\hypk_2(a,b;p),
$$
and
$$
z_i=\frac{1}{\sqrt{p}} \sum_{1\leq x\leq
  i-1}e\Bigl(\frac{ax+b\bar{x}}{p}\Bigr)
$$
for $1\leq i\leq p$.
\par
Since the $t_i$'s are not all equal, the
formula~(\ref{eq-sums-polygonal}) does not apply, but we derive
from~(\ref{eq-fourier-polygonal}) that
\begin{align*}
  \widehat{f}(h)
  &= 
    -\frac{1}{2i\pi h} \hypk_2(a,b;p)+ 
    \frac{1}{2i\pi h}
    \frac{\sin(\pi h/p)}{\pi h/p}
    \frac{1}{\sqrt{p}}\sum_{x=1}^{p-1}e\Bigl(\frac{ax+b\bar{x}}{p}\Bigr)
    e\Bigl(-h\frac{x-\demi+\demi}{p}\Bigr)
  \\
  &=-\frac{1}{2i\pi h} \hypk_2(a,b;p)+ \frac{1}{2i\pi h}
    \frac{\sin(\pi h/p)}{\pi h/p}
    \hypk_2(a-h,b;p)
\end{align*}
for all $h\not=0$.  By the Weil bound for Kloosterman sums, we
conclude that $f\in\spt$.

As a consequence of the symmetry properties discussed in
Section~\ref{sec-structure}, all paths obtained by applying these
symmetries to these modified Kloosterman paths $f$ also belong to
$\spt$, and therefore can be approximated arbitrarily closely (in the
sense of Proposition~\ref{pr-arith}) by (actual!) Kloosterman paths.
This is quite remarkable, for instance because (at least if $p$ is
large enough) neither $-f$ nor $\bar{f}$ is associated to a
Kloosterman path (indeed, the pauses show that this would have to be
of the same type as $f$ for a Kloosterman path modulo the same prime
$p$, and comparing Fourier coefficients, one would need to have either
$-\hypk_2(a-h,b;p)=\hypk_2(c-h,d;p)$ for all $h$ or
$\hypk_2(a+h,b;p)=\hypk_2(c-d,d;p)$ for all $h$; both can be excluded
by elementary considerations concerning the Kloosterman sheaf).
% \par
% Of course, similar remarks apply to other examples below.
\end{example}

\begin{example}
  We proved in~\cite[Th. 1.3]{ks} that the random Fourier series
  $\kpath$ is also the limit of the processes $B_p$ of partial sums of
  Birch sums
$$
B(a;p)=\frac{1}{\sqrt{p}}\sum_{0\leq x\leq
  p-1}e\Bigl(\frac{ax+x^3}{p}\Bigr)
$$
where $a\in\Fp$ is taken uniformly at random. It is then natural to
consider these polygonal Birch paths and to ask whether they belong to
the support of $\kpath$. As defined, there is a trivial obstruction:
the path $t\mapsto B_p(a)(t)$ does not belong to $\fct{F}_0$, because
of the initial summand $1/\sqrt{p}$ for $x=0$.
\par
We can alter the path minimally by splitting the summand $1/\sqrt{p}$
in two summands $1/(2\sqrt{p})$ at the beginning and end of the
path. The resulting function, which we denote $f$, belongs to
$\fct{F}_0$. This means that we consider the polygonal path with
$n=p+1$, $t_i=(i-\demi)/p$ for $1\leq i\leq p$, and with $z_i$ defined
by
$$
z_0=0,\quad\quad z_{p+1}=B(a;p),
$$
and
$$
z_i=\frac{1}{2\sqrt{p}}+\frac{1}{\sqrt{p}} \sum_{1\leq j\leq
  i-1}e\Bigl(\frac{aj+j^3}{p}\Bigr)
$$
for $1\leq i\leq p$.
\par
As in the previous example, from~(\ref{eq-fourier-polygonal}) we get
\begin{multline*}
\widehat{f}(h)= -\frac{1}{2i\pi h} B(a;p)+ 
\frac{1}{2i\pi h}
\Bigl\{
\frac{1}{2\sqrt{p}}e\Bigl(-\frac{h}{4p}\Bigr)
\frac{\sin(\pi h/(2p))}{\pi h/(2p)}\\
+
\frac{\sin(\pi h/p)}{\pi h/p}
\frac{1}{\sqrt{p}}\sum_{x=1}^{p-1}e\Bigl(\frac{(a-h)x+x^3}{p}\Bigr)
+
\frac{1}{2\sqrt{p}}e\Bigl(\frac{h}{4p}\Bigr)
\frac{\sin(\pi h/(2p))}{\pi h/(2p)}
\Bigr\}.
\end{multline*}
The inner expression is equal to
$$
\frac{1}{\sqrt{p}}\frac{\sin(\pi h/(2p))}{\pi h/(2p)}
\cos\Bigl(\frac{\pi h}{2p}\Bigr) + \frac{\sin(\pi h/p)}{\pi
  h/p}\Bigl(B(a-h;p)-\frac{1}{\sqrt{p}}\Bigr)= \frac{\sin(\pi
  h/p)}{\pi h/p}B(a-h;p).
$$
By the Weil bound for Birch sums, we conclude that $f\in\spt$.
\end{example}

\begin{example}
  Let $p$ be a prime and $\chi$ a non-trivial Dirichlet character
  modulo $p$. We consider the polygonal paths interpolating the
  partial sums of the multiplicative character sum
$$
\frac{1}{\sqrt{p}}\sum_{1\leq x\leq p-1}\chi(x).
$$
\par
Let $f$ be the parameterized path where we insert pauses of duration
$1/(2p)$ at the beginning and at the end. Note that $f(1)=0$ by
orthogonality of characters. As in the previous computations, we get
\begin{align*}
  \widehat{f}(h)
  &= 
    \frac{1}{2i\pi h}
    \frac{\sin(\pi h/p)}{\pi h/p}
    \frac{1}{\sqrt{p}}\sum_{x=1}^{p-1}\chi(x)
    e\Bigl(-h\frac{x-\demi+\demi}{p}\Bigr)
  \\
  &=\frac{1}{2i\pi h}
    \frac{\sin(\pi h/p)}{\pi h/p}
    \chi(-1)\tau(\chi)\overline{\chi(h)},
\end{align*}
where
$$
\tau(\chi)=\frac{1}{\sqrt{p}}\sum_{1\leq x\leq
  p-1}\chi(x)e\Bigl(\frac{x}{p}\Bigr)
$$
is the normalized Gauss sum associated to $\chi$ (note that
$\chi(h)=0$ if $p\mid h$). Since $|\tau(\chi)|=1$, it follows that
$|\widehat{f}(h)|\leq 1$.  However, the Fourier coefficients are only
in $i\Rr$ (i.e., $f\in\fct{F}_0$) if $p\equiv 1\mods{4}$ and $\chi$ is
a real character.  In other words, Kloosterman sums can perfectly
mimic the character sums associated to the Legendre symbol modulo
such primes. (Note that in this case, the function $f$ is
real-valued).
\par
Note that character sums as above have been very extensively studied
from many points of view, because of their importance in many problems
of analytic number theory, for instance in the theory of Dirichlet
$L$-functions. We refer for instance to the
works~\cite{granville-sound, bg, bggk} of Bober, Goldmakher,
Granville, Koukoulopoulos and Soundararajan (in various
combinations). It should be possible (and interesting) to study the
support of the limiting distribution of these character paths, but
this will be very different from $\spt$. Indeed, one can expect
(see~\cite{bggk}) that the support in this case would be continuous
functions with totally multiplicative Fourier coefficients. For
instance, one can expect that $0$ does not belong to the support in
that case.
\end{example}

\begin{example}
  More generally, consider a prime $p$ and the polygonal path $f$
  associated to the partial sums of any exponential sum
$$
\frac{1}{\sqrt{p}}
\sum_{1\leq x\leq p}\chi(g_1(x))e\Bigl(\frac{g_2(x)}{p}\Bigr),
$$
where $\chi$ is a Dirichlet character modulo $p$, and $g_1$ and $g_2$
are polynomials in $\Zz[X]$ (with $g_2$ non-constant).  After suitable
tweaks, the Fourier coefficients become
$$
\widetilde{f}(h)= \frac{\sin(\pi h/p)}{\pi h/p}\frac{1}{\sqrt{p}}
\sum_{1\leq x\leq p}\chi(g_1(x))e\Bigl(\frac{g_2(x)-xh}{p}\Bigr).
$$
Assuming $f\in \fct{F}_0$, and under suitable restrictions, we may
expect that $f\in\spt$ only if the geometric monodromy group of the
Fourier transform of the rank $1$ sheaf with trace function the
summand
$$
x\mapsto \chi(g_1(x))e\Bigl(\frac{g_2(x)}{p}\Bigr)
$$
has rank $r$ at most $2$ (otherwise, Deligne's equidistribution
theorem will lead in most cases to the existence of $h$ such that
$0\leq h\leq p-1$ and $|\widetilde{f}(h)|\geq (r-1/2)>2$).
\end{example}

\begin{example}
  A natural question is whether $\spt$ contains a space-filling
  curve. Among the classical examples of such curves, the Hilbert
  curve~\cite[Ch. 2]{sagan} has a sequence of quite simple polygonal
  approximations $f_n$ for $n\geq 1$ that belong to $\fct{F}_0$
  (see~\cite[p. 14]{sagan}). We have in Figure~\ref{fig-hilbert} the
  plots of the second, third and fourth such approximations (note that
  there are many backtrackings, so this is a case where the plot
  doesn't give a clear idea of the path followed).

\begin{figure}
\caption{Approximations of the Hilbert function}\label{fig-hilbert}
\includegraphics[width=6cm]{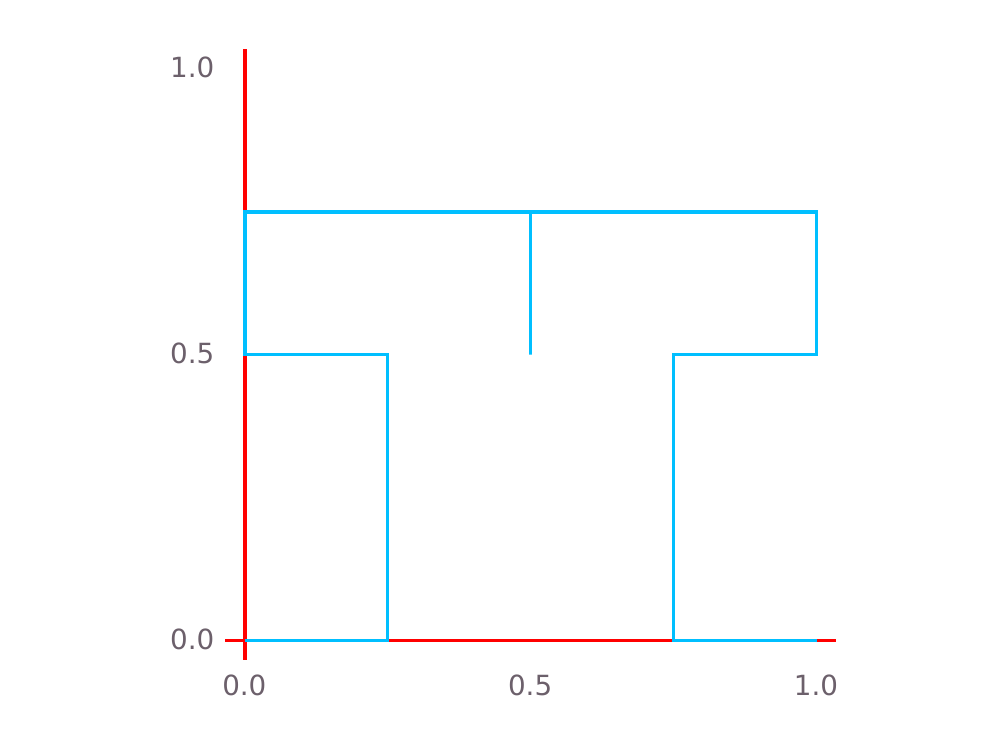}
\includegraphics[width=6cm]{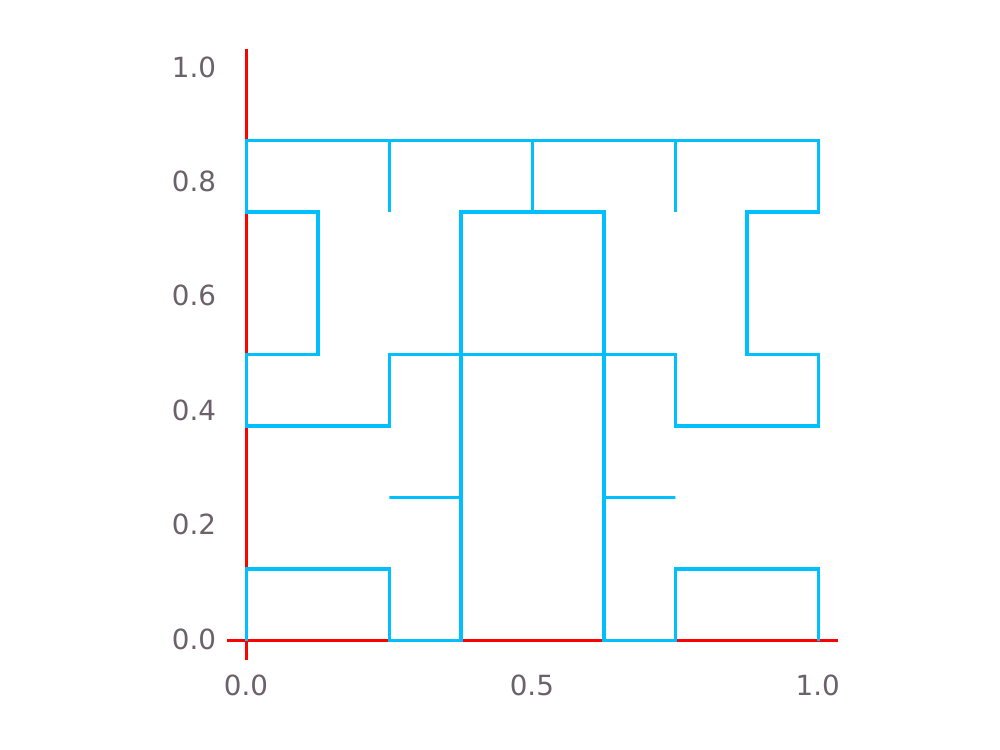}
\includegraphics[width=6cm]{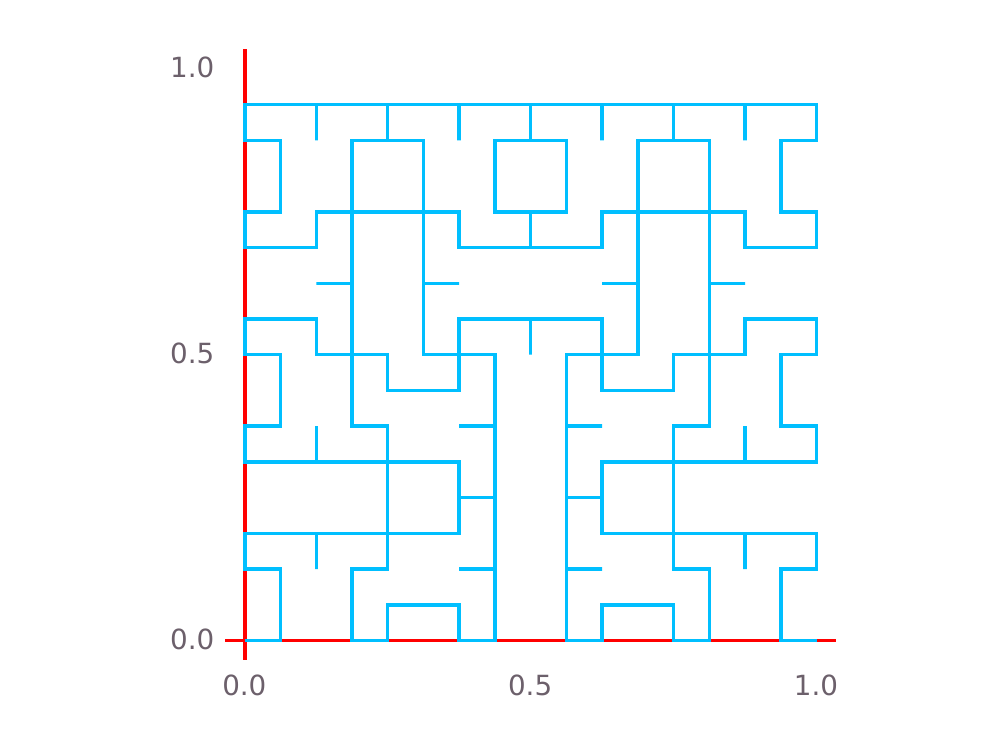}
\end{figure}

The function $f_n$ is a polygonal path composed of $4^n$ segments of
length $2^{-n}$. One checks that the Fourier coefficients are given by
$$
\widetilde{f}_n(h)= \frac{1}{2^n}
\sum_{j=0}^{4^n-1}i^{\delta_n(j)}e\Bigl(-\frac{h(j+1/2)}{4^n}\Bigr),
$$
for $h\not=0$, where the exponents $\delta_n(j)$ (in $\Zz/4\Zz$) are
determined inductively by
$$
\delta_1(0)=1,\quad \delta_1(1)=\delta_1(2)=0,\quad \delta_1(3)=3,
$$
and 
$$
\delta_{n+1}(4j)=1-\delta_n(j),\quad
\delta_{n+1}(4j+1)=\delta_{n+1}(4j+2)=\delta_n(j),\quad
\delta_{n+1}(4j+3)=3-\delta_n(j)
$$
for $n\geq 1$ and $0\leq j\leq 4^n-1$. The requirement for $f_n$ to
belong to $\spt$ is satisfied when these sums exhibit precisely the
analogue of the Weil bounds for $1\leq h\leq 4^n-1$. This may or may
not happen, and it turns out (numerically) that the first three
approximations are in $\spt$, but not the fourth.

%% Blocks
%% 0 --> 1003
%% 1 --> 0112
%% 2 --> 3221
%% 3 --> 2330

%% ** n=2

%% 1,0,0,3

%% ** n=3

%% 0,1,1,2   
%% 1,0,0,3  
%% 1,0,0,3   
%% 2,3,3,0

%% ** n=4

%% 1,0,0,3   
%% 0,1,1,2  
%% 0,1,1,2   
%% 3,2,2,1

%% 0,1,1,2   
%% 1,0,0,3  
%% 1,0,0,3   
%% 2,3,3,0

%% 0,1,1,2   
%% 1,0,0,3
%% 1,0,0,3   
%% 2,3,3,0

%% 3,2,2,1   
%% 2,3,3,0  
%% 2,3,3,0   
%% 1,0,0,3

\end{example}

\section{Changing the parameterization}\label{sec-fourier}

When we display the picture of a Kloosterman path, we are really only
seeing the \emph{image} of the corresponding function from $[0,1]$ to
$\Cc$. Although it is not really an arithmetic question anymore, it
seems fairly natural to ask which subsets of $\Cc$ are really going to
appear. This may be interpreted in different ways: (1) given a
function $f$ in $\fct{F}_0$, but not in $\spt$, when does there exist
a change of variable $\varphi\colon [0,1]\to [0,1]$ such that
$f\circ \varphi$ belongs to $\spt$?  (2) given a compact subset
$X\subset \Cc$, when does there exist an element $f\in\spt$ such that
$X=f([0,1])$?
\par
\emph{A priori}, these questions might be quite different. However, we
first show that the second essentially reduces to the
first. Precisely, we have a topological characterization of images of
functions in $\fct{F}_0$.

\begin{proposition}\label{pr-h-m-symmetric}
  Let $X\subset \Cc$ be a compact subset. The following conditions are
  equivalent:
\begin{enumerate}
\item There exists $f\in\fct{F}_0$
  such that $X$ is the image of $f$.
\item We have $0\in X$, there exists a real number $\alpha$ such that
  $X$ is symmetric with respect to the line $\Reel(z)=\alpha$, and
  there exists a continuous function $f\in C([0,1])$ such that
  $X=f([0,1])$.
\item We have $0\in X$, there exists a real number $\alpha$ such that
  $X$ is symmetric with respect to the line $\Reel(z)=\alpha$, and $X$
  is connected and locally connected.
\end{enumerate}
\end{proposition}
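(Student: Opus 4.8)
The plan is to prove $(1)\Rightarrow(2)$ directly, to recognize $(2)\Leftrightarrow(3)$ as the Hahn--Mazurkiewicz theorem, and then to prove $(2)\Rightarrow(1)$ by an explicit symmetrization of a parameterization. The key elementary observation is that, since $f(1)\in\Rr$ for every $f\in\fct{F}_0$, the relation~\eqref{eq-symmetry} defining $\fct{F}_0$ is equivalent to $f(1-t)=\sigma_f(f(t))$ for all $t\in[0,1]$, where $\sigma_f(z)=f(1)-\bar z$ is the reflection of $\Cc$ across the vertical line $\Reel(z)=\demi f(1)$. Hence, if $f\in\fct{F}_0$, then $X=f([0,1])$ is compact, is a continuous image of $[0,1]$, contains $f(0)=0$, and satisfies $\sigma_f(X)=X$; this is $(2)$ with $\alpha=\demi f(1)$. (For bounded $X$ such an $\alpha$ is in fact unique, since invariance under two distinct vertical reflections would force invariance under a nontrivial translation; but uniqueness of $\alpha$ is not needed below.) As for $(2)\Leftrightarrow(3)$: the conditions ``$0\in X$'' and ``$X$ is symmetric across $\Reel(z)=\alpha$'' appear identically in both, and what remains is precisely the statement that a nonempty compact metric space is a continuous image of $[0,1]$ if and only if it is connected and locally connected --- the Hahn--Mazurkiewicz theorem (see e.g.~\cite[Ch.\ 6]{sagan}), $X$ being automatically nonempty ($0\in X$) and metrizable ($X\subset\Cc$) here; the substantial half is $(3)\Rightarrow(2)$.

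It remains to prove $(2)\Rightarrow(1)$. Assume $X$ is compact, $0\in X$, and $\sigma(X)=X$ for the reflection $\sigma(z)=2\alpha-\bar z$, and let $g_0\colon[0,1]\to X$ be a continuous surjection. Then $2\alpha=\sigma(0)\in X$, and since $X$ is connected the continuous function $\Reel$ attains on $X$ both the values $0$ and $2\alpha$, hence also $\alpha$; so the fixed-point set $L=X\cap\{\,\Reel(z)=\alpha\,\}$ of $\sigma$ is nonempty, and we fix some $w_0\in L$. A Peano continuum is path-connected, so we may choose paths in $X$ joining $0$ to $g_0(0)$ and $g_0(1)$ to $w_0$; concatenating them with $g_0$ and rescaling the parameter gives a continuous surjection $g\colon[0,\demi]\to X$ with $g(0)=0$ and $g(\demi)=w_0$. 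Now define
$$
f(t)=\begin{cases} g(t) & \text{if }0\leq t\leq\demi,\\ \sigma\bigl(g(1-t)\bigr) & \text{if }\demi\leq t\leq1.\end{cases}
$$
The two formulas agree at $t=\demi$ because $\sigma(w_0)=w_0$, so $f\in C([0,1])$; further $f(0)=0$, $f(1)=\sigma(0)=2\alpha$, and $f([0,1])=g([0,\demi])\cup\sigma\bigl(g([0,\demi])\bigr)=X\cup\sigma(X)=X$. Finally, checking the two ranges of $t$ separately and using $\sigma^2=\mathrm{id}$ yields $f(1-t)=\sigma(f(t))$, i.e.\ $f(t)+\overline{f(1-t)}=2\alpha=f(1)$, so $f\in\fct{F}_0$ with image $X$. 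This establishes $(2)\Rightarrow(1)$ and closes the cycle.

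The main obstacle is this last construction, and within it the two classical facts it relies on: the substantial direction of Hahn--Mazurkiewicz (which is what allows condition (2), equivalently (3), to supply a continuous surjection $[0,1]\to X$ at all), and the path-connectedness of Peano continua (used to replace $g_0$ by a surjection with the prescribed endpoints $0$ and $w_0\in L$). Once these are granted, everything else --- the reflection $\sigma$, the production of a fixed point $w_0$ from connectedness and the intermediate value theorem, and the folding $t\leftrightarrow 1-t$ --- is routine bookkeeping. If one prefers not to quote path-connectedness as a black box, it can be extracted from the surjection $[0,1]\to X$ together with the local connectedness (hence local arcwise connectedness) of $X$.
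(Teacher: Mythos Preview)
Your proof is correct. The implication $(1)\Rightarrow(2)$ and the equivalence $(2)\Leftrightarrow(3)$ via Hahn--Mazurkiewicz match the paper exactly. For $(2)\Rightarrow(1)$, however, you take a somewhat different route from the paper, and it is worth noting the contrast.

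The paper works entirely with the given surjection $f\colon[0,1]\to X$: after locating $s_0$ with $f(s_0)=0$ and some $s_1\geq s_0$ with $f(s_1)=2\alpha$, it takes $t_0$ to be the \emph{last} time $\geq s_0$ at which $\Reel f$ hits $\alpha$, and then argues by maximality (and IVT) that $f([0,t_0])\cup r(f([0,t_0]))$ already exhausts $X$. The new function is then built by retracing $f$ over $[0,s_0]$ back and forth and then following $f$ on $[s_0,t_0]$, finally reflecting. No auxiliary paths are introduced; only the original $f$ and the reflection are used.

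You instead fix a point $w_0$ on the symmetry axis, and use path-connectedness of $X$ to splice in short paths so that the surjection can be assumed to start at $0$ and end at $w_0$; the reflected extension then does the job. This is clean and avoids the maximality argument. One small remark: you cite ``Peano continua are path-connected'' and even offer to extract it from local connectedness, but this is overkill here---$X$ is given as a continuous image of $[0,1]$, so path-connectedness is immediate (pick preimages and restrict $g_0$). With that simplification your argument is entirely elementary, and arguably a bit tidier than the paper's back-and-forth construction; on the other hand, the paper's version has the virtue of never leaving the track of the original parameterization.
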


\begin{proof}
  It is immediate that (1) implies (2). Conversely, assume that (2)
  holds and let $f$ be a continuous function such that $f([0,1])=X$.
  Let $r\colon \Cc\to\Cc$ be the symmetry along the line
  $\Reel(z)=\alpha$, so that $X=r(X)$.  By assumption, there exist
  $s_0\in [0,1]$ and $s_1\in [0,1]$ be such that $f(s_0)=0$ and
  $f(s_1)=r(0)=2\alpha$. Up to replacing $f$ by $t\mapsto f(1-t)$, we
  may assume that $s_0\leq s_1$.
% \par
% If $\alpha=0$, we simply define
% $$
% g(t)= \begin{cases}
%   f(2t)&\text{ if } 0\leq t\leq 1/2\\
%   -\overline{f(2(1-t))}&\text{ if } 1/2\leq t\leq 1.
% \end{cases}
% $$
% This belongs to $\fct{F}_0$ (because $f(1)=0$) and has the same image
% as $f$.
\par
Let $T$ be the set of all $t\in [0,1]$ such that $t\geq s_0$ and
$\Reel(f(t))=\alpha$. This set is closed and it is non-empty (because
the image of the continuous real-valued function $\Reel(f)$ contains
$0=f(s_0)$ and $r(0)=2\alpha=f(s_1)$ by assumption, and
$s_1\geq s_0$). Let $t_0=\max T$ and $Y=f([0,t_0])\cup
r(f([0,t_0]))$. We claim that $X=Y$. Indeed, suppose some $x\in X$ is
not in $Y$. Then we also have $r(x)\notin Y$. Hence we can write
$x=f(t_1)$ with $t_1>t_0$ and $r(x)=f(t_2)$ with $t_2>t_0$.  Then
% $\Reel(f(t_1))\not=\alpha$ (otherwise $t\in T$, so $t$ would be
% $\leq t_0$), and
$$
\alpha=\frac{1}{2}(\Reel(f(t_2))+\Reel(f(t_1)),
$$
so $\alpha$ is in the interval between $\Reel(f(t_1))$ and
$\Reel(f(t_2))$. By continuity, there exists $s$ between $t_1$ and
$t_2$ with $\Reel(f(s))=\alpha$, contradicting the maximality of
$t_0$.
\par
Now define
% Let $T$ be the set of all
%   $t\in [0,1]$ such $X=f([0,t])\cup r(f([0,t]))$. This is a non-empty
%   set (since $1\in T$) and it is closed by an elementary compactness
%   argument. Let $t_0=\min T\in [0,1]$. We define $g$ by
$$
g(t)=
\begin{cases}
  f(s_0(1-8t))&\text{ if } 0\leq t\leq 1/8\\
  f(2s_0(t-1/8))  & \text{ if } 1/8\leq t\leq 1/4\\
f(s_0+4(t_0-s_0)(t-1/4))  & \text{ if } 1/4\leq t\leq 1/2
\end{cases}
$$
and $g(t)=r(g(1-t))=2\alpha-\overline{g(1-t)}$ if $1/2<t\leq 1$ (in
other words, $g(t)$ covers the path of $f$ from $0=f(s_0)$ to $f(0)$
for $t\in [0,1/8]$, then covers it backwards from $t=1/8$ to $t=1/4$,
then follows the path over $[1/4,1/2]$ from $0$ to $f(t_0)$, and then
proceeds by reflection).
\par
We have $g(0)=0$ and $g$ is continuous (because
$\Reel(g(1/2))=\Reel(f(t_0))=\alpha$), hence $g\in \fct{F}_0$ by
construction.  The image of $g$ is contained in $X$; it contains
$f([0,t_0])$ and its reflection, so its image is $X$. This proves (1)
for the set $X$.
\par
To prove that (2) and (3) are equivalent, we simply need to invoke the
Hahn-Mazurkiewicz Theorem (see, e.g.,~\cite[Th. 6.8]{sagan}
or~\cite[TA, III, p. 272, th. 1]{bourbaki}): a non-empty compact
subset $X\subset \Cc$ is the image of a continuous function
$f\colon [0,1]\to\Cc$ if and only if $X$ is connected and locally
connected.
\end{proof}

Because of this proposition, it is natural to concentrate on the
change of variable problem. Here a subtlety is whether we wish to have
an invertible reparameterization or not: if
$\varphi\colon [0,1]\to [0,1]$ is merely surjective, the image of
$f\circ\varphi$ is the same as that of $f$. However, we consider here
only transformations $\varphi$ that are homeomorphisms. In fact, let
us say that an increasing homeomorphism $\varphi$ of $[0,1]$ such that
$\varphi(1-t)=1-\varphi(t)$ is a \emph{symmetric} homeomorphism. We
then have $f\circ\varphi\in\fct{F}_0$ for all $f\in\fct{F}_0$. The
question is: for a given $f\in\fct{F}_0$, does there exist a symmetric
homeomorphism $\varphi$ such that $f\circ\varphi\in\spt$?
\par
To prove our result for real-valued functions in
Proposition~\ref{pr-repar}, we will use a variant of a result of
Sahakian\footnote{\ Also spelled Saakjan, Saakian,
  Saakyan.}~\cite[Cor. 2]{saakjan}.
\par
Recall that the Faber-Schauder functions $\Lambda_{m,j}$ on $[0,1]$
are defined for $m\geq 0$ and $1\leq j\leq 2^m$ by the following
conditions:
\begin{itemize}
\item The support of $\Lambda_{m,j}$ is the dyadic interval 
$$
\Bigl[\frac{j-1}{2^m},\frac{j}{2^m}\Bigr],
$$
of length $2^{-m}$, 
\item We have $\Lambda_{m,j}((2j-1)2^{-m-1})=1$,
\item The function $\Lambda_{m,j}$ is affine on the two intervals
$$
\Bigl[\frac{j-1}{2^m},\frac{2j-1}{2^{m+1}}\Bigr],\quad\quad
\Bigl[\frac{2j-1}{2^{m+1}},\frac{j}{2^{m}}\Bigr].
$$
\end{itemize}
\par
Any continuous function $f$ on $[0,1]$ has a uniformly convergent
Faber-Schauder series expansion
$$
f(t)= \beta(0)+\beta(1)t+\sum_{m\geq
  0}\sum_{j=1}^{2^m}\beta(m,j)\Lambda_{m,j}(t),
$$
with coefficients
$$
\beta(0)=f(1),\quad\quad \beta(1)=f(1)-f(0),
$$
and
\begin{equation}\label{eq-fs}
  \beta(m,j)=f\Bigl(\frac{2j-1}{2^{m+1}}\Bigr)- \frac{1}{2}\Bigl(
  f\Bigl(\frac{j-1}{2^{m}}\Bigr)+ f\Bigl(\frac{j}{2^{m}}\Bigr) \Bigr)
\end{equation}
(see, e.g.,~\cite[Ch. VI]{kashin-saakyan} for these facts). The
function $f$ is $1$-periodic if and only if $\beta(1)=0$.

\begin{theorem}[Sahakian]\label{th-saakjan}
  Let $g\colon [0,1]\to \Rr$ be a \emph{real-valued} continuous
  function with $g(0)=0$. Let $\eps>0$ be any fixed positive real
  number. 
\par
\emph{(1)} There exists an increasing homeomorphism
$\varphi\colon [0,1]\to [0,1]$ such that the Fourier coefficients of
the function $u(g\circ \varphi)=g\circ \varphi-g(1)t$ satisfy
$$
|\widehat{u(g\circ \varphi)}(h)|\leq\frac{\eps}{|h|}
$$
for all $h\not=0$. 
\par
\emph{(2)} If the function $g$ satisfies %%either
$g(t)+g(1-t)=g(1)$ for all $t$,
%%, or $g(1-t)=g(t)$ for all $t$, 
then we may assume that $\varphi$ is symmetric.
\end{theorem}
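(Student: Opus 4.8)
The plan is to reduce the theorem to a combinatorial choice of the breakpoints of $\varphi$ at the dyadic rationals, for which I would first expand $u(g\circ\varphi)$ in the Faber--Schauder system and bound the Fourier transforms of the basis functions. Since every $\Lambda_{m,j}$ vanishes at $0$ and at $1$, the series $\sum_{m\geq0}\sum_{j=1}^{2^m}\beta(m,j)\Lambda_{m,j}$ represents $g\circ\varphi$ minus its affine interpolant at the endpoints, which (as $g(0)=0$ and $\varphi$ fixes $0$ and $1$) equals $g(1)t$; hence $u(g\circ\varphi)=\sum_{m,j}\beta(m,j)\Lambda_{m,j}$ with $\beta(m,j)$ given by \eqref{eq-fs} for $f=g\circ\varphi$. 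A direct computation gives $\widehat{\Lambda_{m,j}}(h)\ll\min(2^{-m},\,2^{m}/h^{2})$ for $h\neq0$ (the first bound from $\|\Lambda_{m,j}\|_{1}\leq 2^{-m-1}$, the second because $\Lambda''_{m,j}$ is a combination of three point masses of total mass $\leq 2^{m+3}$). Summing over $m$, splitting the range at $2^{m}\sim|h|$ and using geometric series, one gets
$$
\sup_{h\neq0}|h\,\widehat{u(g\circ\varphi)}(h)|\ll\sup_{m\geq0}B_m,\qquad B_m:=\sum_{j=1}^{2^m}|\beta(m,j)|.
$$
So for (1) it suffices, given $\delta>0$, to produce an increasing homeomorphism $\varphi$ with $B_m\leq\delta$ for all $m$, and for (2) to do so with $\varphi$ symmetric.

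To build $\varphi$ I would prescribe its values at dyadic rationals from coarse to fine scales. First, arrange the coarsest levels so that the values $g(\varphi(j/2^{m}))$ are equally spaced between $g(0)$ and $g(1)$ --- a greedy choice of successive solutions of $g(x)=g(0)+\tfrac{j}{2^{m}}(g(1)-g(0))$, which exist by the intermediate value theorem and can be kept strictly increasing --- so that $B_0,\dots,B_{m_0}$ vanish for any prescribed $m_0$. Beyond that scale the construction is recursive: partition $[0,1]$ into finitely many pieces on each of which $g$ oscillates by at most $\delta$, send the $i$-th piece by $\varphi$ from a dyadic interval $J_i$ whose length is taken from a lacunary sequence $2^{-\lambda(i)}$ chosen so that the finitely many ``junction'' coefficients between consecutive pieces fall at well-separated scales, and then recurse inside each $J_i$ to unfold the residual within-piece oscillation (partitioned into sub-pieces of oscillation $\leq\delta^{2}$, and so on). With the lacunarity at each stage large enough, every scale $m$ receives only $O(1)$ junction contributions of size $\leq\delta$ together with contributions from a single recursive stage, so $B_m\leq\delta$ uniformly; one also checks that the nested dyadic intervals shrink uniformly, so that $\varphi$ extends from the dyadics to a homeomorphism of $[0,1]$. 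The precise bookkeeping here is essentially Sahakian's construction.

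For the symmetric statement (2), I would run the construction of the previous paragraph on $[0,1/2]$ only: the hypothesis $g(t)+g(1-t)=g(1)$ gives $g(1/2)=\tfrac12 g(1)$, so $\varphi(1/2)=1/2$ is both admissible and symmetric at the top level, and then $\varphi$ is extended by $\varphi(1-t)=1-\varphi(t)$. This extension is again an increasing homeomorphism and is symmetric by construction, and using the symmetry of $g$ one checks that the Faber--Schauder coefficients of $g\circ\varphi$ at level $m$ occur in pairs $\beta(m,j)$, $\beta(m,2^m+1-j)$ of equal modulus; hence $B_m$ is at most twice the corresponding half-interval sum, which has been made $\leq\delta/2$. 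Taking $\delta$ small enough then gives (2).

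The hard part is the recursive construction in the second paragraph --- making $\sup_m B_m$ small. A reparameterization cannot lower the total variation of $g$, which may well be infinite, so the entire effect must come from redistributing the oscillation of $g$ across scales without blow-up: each portion of the oscillation should be ``resolved'' at a single scale, with only boundedly many imperfect dyadic intervals per scale. That multi-scale bookkeeping is where I expect essentially all of the work to lie; the Faber--Schauder reduction of the first paragraph and the symmetrization of the third are routine by comparison.
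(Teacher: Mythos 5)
Your approach is essentially the same as the paper's. The paper reduces the bound $|\widehat{u(g\circ\varphi)}(h)|\leq\eps/|h|$ to making the Faber--Schauder coefficients of $g\circ\varphi$ small via the integral modulus of smoothness, using the classical estimate $|\widehat f(h)|\leq\tfrac14\omega_f(1/|h|)$ together with $\omega_{\Lambda_{m,j}}(\delta)\ll\min(2^m\delta^2,2^{-m})$ and subadditivity; you replace this with the direct estimate $|\widehat{\Lambda_{m,j}}(h)|\ll\min(2^{-m},2^m/h^2)$ and a geometric sum across scales, which is the same computation carried out on the frequency side and lands on the sufficient condition $\sup_m\sum_j|\beta(m,j)|\leq\delta$. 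For the construction of $\varphi$ the paper cites Sahakian's Lemma~1 in \cite{saakjan}, which produces $\varphi$ with at most one nonzero $\beta(m,j_m)$ per level of size $<\eps/C$; your second paragraph is an attempt to re-derive that lemma, and it is much vaguer than the rest of the argument --- in particular, the claim that the greedy IVT choice at coarse scales ``can be kept strictly increasing'' needs a real argument, and the lacunary-scale recursion with ``junction coefficients'' is not written precisely enough to check that every level $m$ really receives only $O(1)$ contributions. You acknowledge this yourself (``the precise bookkeeping here is essentially Sahakian's construction''), and the paper also black-boxes it, so this is not a gap so much as the expected place where you must fall back on the reference. Your symmetrization for (2) (run the construction on $[0,1/2]$, extend by $\varphi(1-t)=1-\varphi(t)$, use $g(t)+g(1-t)=g(1)$ to see the reflected coefficients have the same modulus) is exactly the paper's argument.
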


We emphasize that the function $g$ is real-valued; it does not seem to
be known whether the statement (1) holds for a complex-valued function
$g$.  The issue in the proof in~\cite{saakjan} is the essential use of
the intermediate value theorem.\footnote{\ One might hope to extend
  the proof to any continuous function $f\colon [0,1]\to \Cc$
  satisfying the intermediate value property, in the sense that the
  image $f([s,t])$ of any interval $[s,t]\subset [0,1]$ contains the
  segment $[f(s),f(t)]$ (or equivalently such that $f([s,t])$ is
  always convex), but it is an open question of Mihalik and Wieczorek
  whether such functions exist that do not take values in a line in
  $\Cc$ (see the paper of Pach and Rogers~\cite{p-r} for the best
  known result in this direction.)}
  
\begin{proof}[Sketch of proof]
  Below, we will say that a continuous function $g\colon [0,1]\to\Cc$
  is $1$-periodic if $g(0)=g(1)$, which means that the periodic
  extension of $g$ to $\Rr$ is continuous.
\par
The result requires only very minor changes in Sahakian's argument,
which does not address exactly this type of uniform ``numerical''
bounds, but asymptotic statements like
$|\widehat{(g\circ \varphi)}(h)|=o(|h|^{-1})$ as $|h|\to+\infty$ when
$g$ is $1$-periodic.%% (these are stronger for large $h$, of course).
\par
For any continuous $1$-periodic function $f$ on $[0,1]$, extended to
$\Rr$ by periodicity, define
$$
\omega_f(\delta)=\sup_{0<\alpha\leq \delta}
\int_0^1|f(x+\alpha)+f(x-\alpha)-2f(x)|dx.
$$
A classical elementary argument (compare~\cite[II.4]{Z}) shows that
for a $1$-periodic function $f$, we have
\begin{equation}\label{eq-zygmund}
|\widehat{f}(h)|\leq \frac{1}{4}\omega_f\Bigl(\frac{1}{|h|}\Bigr)
\end{equation}
for all $h\not=0$. It is also elementary that there exists $C>0$ such
that
$$
\omega_{\Lambda_{m,j}}(\delta)\leq C\min(2^m\delta^2,2^{-m})
$$
for all $m$ and $j$.
\par
By~\cite[Lemma 1]{saakjan}, applied to the continuous real-valued
function $t\mapsto g(2\pi t)$ on $[0,2\pi]$, there exists a
homeomorphism $\varphi$ such that, for any $m\geq 0$, the coefficients
$\beta(m,j)$ of the Faber-Schauder expansion of $g\circ \varphi$
vanish for all but at most one index $j_m$, and moreover, we have
$$
|\beta(m,j_m)|<\frac{\eps}{C}.
$$
Note that the text of~\cite{saakjan} might suggest that the lemma is
stated for $1$-periodic functions, but the proof is in fact written
for arbitrary continuous functions (as it must, since it proceeds by
an inductive argument from $[0,1]$ to dyadic sub-intervals, and any
periodicity assumption in the construction would be lost after the
first induction step).
\par
Let $\gamma_m=\beta(m,j_m)$ and $\Phi_m=\Lambda_{m,j_m}$. Since
$g(1)=g(\varphi(1))$, we have the series expansion
$$
u(g\circ\varphi)(t)=(g\circ \varphi)(t)-g(1)t=\sum_{m\geq
  0}\gamma_m\Phi_{m}(t),
$$
uniformly for $t\in [0,1]$ and hence, using the subadditivity of
$f\mapsto \omega_f$, we get
$$
\omega_{u(g\circ \varphi)}(\delta)\leq \eps \sum_{m\geq
  0}\min(2^m\delta^2,2^{-m}) \leq 4\eps\delta.
$$
By~(\ref{eq-zygmund}), we get
$$
|(\widehat{u(g\circ\varphi)})(h)|\leq \frac{\eps}{|h|}.
$$
for $h\not=0$, which proves the first statement.
\par
Consider now the case when the condition $g(t)+g(1-t)=g(1)$ holds. We
then apply the previous argument (properly scaled) to the restriction
of $g$ to $[0,1/2]$, obtaining an increasing homeomorphism $\psi$ of
$[0,1/2]$ such that
\begin{equation}\label{eq-faber1}
  (g\circ \psi)(t)-2g(1/2)t=(g\circ\psi)(t)-g(1)t
  =\sum_{m\geq 1}\gamma_m \Phi_{m}(t)
\end{equation}
for $0\leq t\leq 1/2$ where $|\gamma_m|\leq \eps C^{-1}$ and $\Phi_m$
is a Faber-Schauder function associated to an interval of length
$2^{-m}$ of $[0,1/2]$.
\par
We define $\varphi\colon [0,1]\to [0,1]$ so that $\varphi$ coincides
with $\psi$ on $[0,1/2]$ and $\varphi(1-t)=1-\varphi(t)$ for
$0\leq t\leq 1/2$. Then $\varphi$ is a symmetric homeomorphism of
$[0,1]$. Because of the symmetry of $g$ and~(\ref{eq-faber1}), we have
for $1/2\leq t\leq 1$ the formula
\begin{align*}
  (g\circ \varphi)(t)=g(1)-g(\varphi(1-t))
  &=
    g(1)-(1-t)g(1)-\sum_{m\geq 1}\gamma_m\Phi_m(1-t)\\
  &=
    g(1)t-\sum_{m\geq 1}\gamma_m \Phi_{m}(1-t).
\end{align*}
Since the supports are disjoint, we can therefore write
% the definition of the
% Faber-Schauder functions and the formula~(\ref{eq-fs}) for the
% Faber-Schauder coefficients, we have the expansion
$$
u(g\circ \varphi)(t)= (g\circ \varphi)(t)-g(1)t= \sum_{m\geq
  1}\gamma_m \Phi_{m}(t)- \sum_{m\geq 1}\gamma_m \Phi_{m}(1-t)
$$
for \emph{all} $t\in [0,1]$. Now we evaluate the Fourier coefficients
as before.
% inspection of the proof of~\cite[Lemma 1]{saakjan} shows that in that
% case, the homeomorphism $\varphi$ can be taken to satisfy
% $\varphi(1-t)=1-\varphi(t)$. More precisely:
% \begin{itemize}
% \item Aat the initial step~\cite[p. 553]{saakjan}, one can take
%   $a_1^1=1/2$ because the symmetry of $g$ ensures that
%   $\demi(g(0)+g(1))=g(1/2)$, so Case 1) in loc. cit. holds, and
%   $a_1^1=1/2$ is acceptable.
% \item After determining the values $\varphi(j/2^m)=a_m^j$ for
%   $j\leq 2^{m-1}$ as in~\cite[p. 554]{saakjan}, one may prescribe
%   $\varphi(1-j/2^m)$ to satisfy the desired relation; the symmetry of
%   $g$ again shows that this assignment satisfies the conditions
%   spelled out in~\cite[p. 552]{saakjan}).
% \end{itemize}
%%The case when $g(1-t)=g(t)$ is similar.
\end{proof}

We can now prove Proposition~\ref{pr-repar}.

% \begin{proposition}\label{pr-repar}
%   Let $f\in\fct{F}_0$ such that $|f(1)|\leq 2$. Assume that either $f$
%   is real-valued, or that it takes values in $i\Rr$.
% % one of
% %   the following conditions holds:
% % \par
% % \emph{(1)} The function $f$ is real-valued.
% % \par
% % \emph{(2)} The function $f$ takes values in $i\Rr$.
% % \par
% % \emph{(3)} We have $\widehat{f}(h)=0$ for all $h<0$.
% % \par
% % \emph{(4)} We have $\widehat{f}(h)=0$ for all $h>0$.
% % \par
% Then there exists a symmetric increasing homeomorphism
% $\varphi\colon [0,1]\to [0,1]$
% % such that $\varphi(1-t)=1-\varphi(t)$ for all $t$, and
% such that $f\circ \varphi\in\spt$.
% \end{proposition}

\begin{proof}[Proof of Proposition~\ref{pr-repar}]
  Let $f$ be a real-valued function $f\in\fct{F}_0$ with
  $|f(1)|\leq 2$. Theorem~\ref{th-support} and
  Theorem~\ref{th-saakjan} (2) applied to $f$ (which satisfies
  $f(t)+f(1-t)=f(1)$ since it is real-valued) with
  $\eps=1/\pi$ imply the existence of the desired reparameterization.
% . Similarly, if $f$ takes values in $i\Rr$, we can apply
%   Theorem~\ref{th-saakjan} (3) to $\Imag(f)$ and $\eps=1/\pi$.
%   \par
% In Case (3), we observe that for any $f\in\fct{F}_0$, since the
% Fourier coefficients of $f$ belong to $i\Rr$, the continuous function
% $g=\Reel(f)$ has real Fourier series
% $$
% \sum_{h\geq 1} i(\widehat{f}(h)-\widehat{f}(-h))\sin(2\pi ht).
% $$
% If $\widehat{f}(h)=0$ for $h<0$, then this series becomes
% $$
% \sum_{h\geq 1} i\widehat{f}(h)\sin(2\pi ht).
% $$
% By Case (1), there exists a symmetric homeomorphism
% $\varphi\colon [0,1]\to [0,1]$ such that $g\circ \varphi\in\spt$. 
\end{proof}

% We conclude by showing that, in general, reparameterization of a
% function $f\in\fct{F}_0$ with $|f(1)|\leq 2$ is not possible.

% \begin{proposition}
%   Let $f\in\fct{F}_0$ be of the form $f=i\alpha \Lambda_{0,1}+f_1$
%   where $|\alpha|\geq 2$ and $f_1\in\fct{F}_0$ satisfies $f_1(1)=0$
%   and $\|f_1\|_{\infty}\leq 1/10$. For any symmetric homeomorphism
%   $\varphi$, we have $f\circ \varphi\notin \spt$.
% \end{proposition}

% \begin{proof}

% \end{proof}

%%%
%% f=\Lambda_{0,1}
%% \widehat{f}(h)=-(1-(-1)^h)\frac{1}{4\pi^2h^2}
%%%

\begin{remark}
% (1)   Let $f$ be any element of $\fct{F}_0$ with $f(1)=0$. Since the
%   Fourier coefficients of $f$ belong to $i\Rr$, the continuous
%   periodic function $g=\Reel(f)+\Imag(f)$ has real Fourier series
% $$
% i\sum_{h\geq 1} \Bigl((\widehat{f}(h)-\widehat{f}(-h))\sin(2\pi ht)+
% (\widehat{f}(h)+\widehat{f}(-h))\cos(2\pi ht)\Bigr).
% $$
% Hence there exists an increasing homeomorphism $\varphi$ of $[0,1]$
% such that $g\circ \varphi$ has real Fourier expansion
% $$
% \sum_{h\geq 1} \Bigl(a_h\sin(2\pi ht)+b_h\cos(2\pi ht)\Bigr),
% $$
% with
% $$
% |a_h|+|b_h|\leq \frac{1}{2\pi|h|}
% $$
% for all $h\geq 1$. However, since $\varphi$ is not necessarily
% symmetric, the Fourier coefficients of $f\circ\varphi$ are not
% necessarily purely imaginary, which means we can only conclude
% something like that
% $$
% |\Reel(\widehat{(f\circ\varphi)}(h))
% +\Imag(\widehat{(f\circ\varphi)}(h)|\leq \frac{1}{\pi|h|}
% $$
% for all $h\not=0$.  
% \par
(1) The prototypical statement of ``improvement'' of convergence of a
Fourier series by change of variable is the Bohr-P\'al Theorem (see,
e.g.,~\cite[Th. VII.10.18]{Z}), which gives for any $1$-periodic
continuous \emph{real-valued} function $f$ a homeomorphism $\varphi$
of $[0,1]$ such that the Fourier $f\circ \varphi$ converges uniformly
on $[0,1]$. The extension to complex-valued functions was obtained by
Kahane and Katznelson~\cite{kk}.
\par
(2) It seems that the problem of obtaining the bound
$\widehat{f\circ \varphi}(h)=O(|h|^{-1})$ for a complex-valued
$1$-periodic function $f\in C([0,1])$ is quite delicate. For instance,
let $W_2^{1/2}$ be the Banach space of integrable functions $f$ on
$[0,1]$ such that
$$
\sum_{h\in\Zz}|h||\widehat{g}(h)|^2<+\infty.
$$
Let $f_1$ be a real-valued $1$-periodic function in
$C([0,1])$. Lebedev~\cite[Th. 4]{lebedev} proves that if $f_1$ has the
property that, for any $f\in C([0,1])$ with real part $f_1$, there
exists an homeomorphism $\varphi$ such that both
$f_1\circ \varphi=\Reel(f)\circ \varphi$ and $\Imag(f)\circ \varphi$
belong to $W_2^{1/2}$, then $f_1$ is of bounded variation (and indeed,
the converse is true).
% \par
% (3) On the other hand, let $f\in\fct{F}_0$ be such that the imaginary
% part of $f$ is of bounded variation (for instance, is $C^1$). If
% $|f(1)|\leq 2$, then we can apply Proposition~\ref{pr-repar} to
% $\Reel(f)\in\fact{F}_0$ and obtain a symmetric homeomorphism $\varphi$
% such that $\Reel(f)\circ \varphi\in\spt$. Although $f\circ \varphi$
% might not belong to $\spt$, since the imaginary part of $f\circ
% \varphi$ is of bounded
\par
(3) Note that in any reparameterization $f\circ \varphi$ of
$f\in \fct{F}_0$ with $\varphi$ symmetric, the coefficient
$\beta(0,1)$ of the Faber-Schauder function $\Lambda_{0,1}$ is
unchanged: because $\varphi(1/2)=1/2$, it is
$$
\beta(0,1)=f\Bigl(\frac{1}{2}\Bigr)-\frac{1}{2}(f(0)+f(1))=\Imag(f(\demi)).
$$
In particular, one cannot hope to reparameterize all functions with
$f(1/2)\notin \Rr$ using information on the Faber-Schauder expansion
of $f\circ\varphi$ and individual estimates for each Faber-Schauder
function that is involved.
\end{remark}

\end{document}